\documentclass[11pt]{amsart}
\usepackage{amsfonts}
\usepackage{amsthm}
\usepackage{amsmath}
\usepackage{graphicx}
\usepackage{amscd,amssymb,amsthm}

\setlength{\paperwidth}{210mm} \setlength{\paperheight}{297mm}
\setlength{\oddsidemargin}{0mm} \setlength{\evensidemargin}{0mm}
\setlength{\topmargin}{-20mm} \setlength{\headheight}{10mm}
\setlength{\headsep}{13mm} \setlength{\textwidth}{160mm}
\setlength{\textheight}{240mm} \setlength{\footskip}{0mm}
\setlength{\marginparwidth}{0mm} \setlength{\marginparsep}{0mm}

\newcommand{\dt}{{\rm d}t}
\newcommand{\f}{\phi}
\newcommand{\ec}{\operatorname{erfc}}

\newcounter{minutes}\setcounter{minutes}{\time}
\divide\time by 60
\newcounter{hours}\setcounter{hours}{\time}
\multiply\time by 60 \addtocounter{minutes}{-\time}

\title[Tur\'an type inequalities for Tricomi confluent hypergeometric functions]
{Tur\'an type inequalities for Tricomi confluent hypergeometric
functions}

\author[\'Arp\'ad Baricz]{\'Arp\'ad Baricz$\dag$}

\author[Mourad E.H. Ismail]{Mourad E.H. Ismail$\ddag$$\S$}

\keywords{Parabolic cylinder functions; Tricomi confluent
hypergeometric functions; Kummer confluent hypergeometric functions;
Whittaker functions; modified Bessel functions; Tur\'an type
inequalities; Tur\'an determinants; logarithmically convex
functions; complete monotonicity; absolute monotonicity.}

\subjclass[2000]{Primary 33C15, Secondary 26D07, 39C05.}

\newtheorem{theorem}{Theorem}
\newtheorem{remark}{Remark}

\begin{document}

\def\thefootnote{}
\footnotetext{ \texttt{File:~\jobname .tex,
          printed: \number\year-0\number\month-\number\day,
          \thehours.\ifnum\theminutes<10{0}\fi\theminutes}
} \makeatletter\def\thefootnote{\@arabic\c@footnote}\makeatother

\maketitle

\begin{center}
{\footnotesize $\dag$Department of Economics, Babe\c{s}-Bolyai
University,
Cluj-Napoca 400591, Romania\\ \email{bariczocsi@yahoo.com}\\
$\ddag$Department of Mathematics, University of Central Florida, Orlando, Florida 32816, United States\\
$\S$Department of Mathematics, King Saud University, Riyadh, Saudi Arabia\\
\email{mourad.eh.ismail@gmail.com}}
\end{center}

\begin{abstract}
Some sharp two-sided Tur\'an type inequalities for parabolic
cylinder functions and Tricomi confluent hypergeometric functions
are deduced. The proofs are based on integral representations for
quotients of parabolic cylinder functions and Tricomi confluent
hypergeometric functions, which arise in the study of the infinite
divisibility of the Fisher-Snedecor $F$ distribution. Moroever, some
complete monotonicity results are given concerning Tur\'an
determinants of Tricomi confluent hypergeometric functions. These
complement and improve some of the results of Ismail and Laforgia
\cite{il}.
\end{abstract}

\section{Introduction}

Since the publication in 1948 by Szeg\H o \cite{szego} of Tur\'an's
inequality for Legendre polynomials \cite{turan}, many researchers
have produced  analogous results for orthogonal polynomials and
special functions. In the last six decades it was shown by several
researchers that the most important special functions satisfy
Tur\'an type inequalities, see for example the most recent papers on this
topic written in the last five years \cite{alzer0}--\cite{berg},
\cite{il,karp0,karp,segura} and the references therein. Tur\'an type
inequalities seem to be evergreen in the theory of special
functions, nowadays they have an extensive literature and some of
the results have been applied successfully in problems which arise
in information theory, economic theory and biophysics. For more
details the interested reader is referred to the papers
\cite{bariczproc1,barnard,eli,sun}. Motivated by these applications,
recently the Tur\'an type inequalities have been investigated also
for hypergeometric and confluent  hypergeometric functions, as well as for the
generalized hypergeometric functions. See
\cite{baricz1,baricz3,barnard,karp} and the references therein for
more details.

In this paper we make a contribution to  the above mentioned
results by proving the corresponding sharp Tur\'an type inequalities
for parabolic cylinder functions and Tricomi confluent
hypergeometric functions. These results naturally complement  the
earlier results for Hermite polynomials, modified Bessel functions
of the second kind and Kummer confluent hypergeometric functions.

In Section 2 we consider the
parabolic cylinder functions and we prove a sharp Tur\'an type
inequality by using an integral formula from \cite{kelker}. In
Section 3 we establish Tur\'an type inequalities for the
Tricomi  $\psi$ (confluent hypergeometric) functions by using another  integral
representation formula from \cite{kelker}. The latter integral representation
 was the main tool
in the proof of the infinite divisibility of the Fisher-Snedecor $F$
distribution. Finally, in Section 4 we present a general result
concerning Tur\'an determinants whose  entries are
 functions having convenient integral representation.
These yield complete monotonicity results for Tur\'an
determinants of Tricomi confluent hypergeometric functions. The main
results of Sections 3 and 4 complement and improve the results of
Ismail and Laforgia \cite{il} concerning the Tricomi hypergeometric
function.

\section{Tur\'an type inequalities for parabolic cylinder functions}

The parabolic cylinder function or sometimes called as Weber
function $U(a,\cdot),$ denoted also as $D_{-a-1/2}$ following
Whittaker's notation, is a particular solution of Weber's
differential equation (see \cite[p. 687]{abra} or \cite[p.
116]{magnus2})
\begin{equation}\label{eqweber}
w''(x)-\left(a+\frac{x^2}{4}\right)w(x)=0
\end{equation}
and its value is represented explicitly as
$$U(a,x)=\frac{1}{2^{\eta}\sqrt{\pi}}\left[\cos(\eta\pi)\Gamma\left(\frac{1}{2}-
\eta\right)y_1(a,x)-\sqrt{2}\sin(\eta\pi)\Gamma(1-\eta)y_2(a,x)\right],$$
where
$$y_1(a,x)=\exp\left({-\frac{x^2}{4}}\right)\Phi\left(\frac{a}{2}+\frac{1}{4},\frac{1}{2},\frac{x^2}{2}\right)$$
and
$$y_2(a,x)=x\exp\left({-\frac{x^2}{4}}\right)\Phi\left(\frac{a}{2}+\frac{3}{4},\frac{3}{2},\frac{x^2}{2}\right)$$
are independent solutions of \eqref{eqweber}, $\eta=a/2+1/4$ and
$\Phi(a,c,\cdot)$ stands for the Kummer confluent hypergeometric
function, called also as confluent hypergeometric function of the
first kind.

Our first main result is Theorem \ref{th1} below.

\begin{theorem}\label{th1}
If $a>0$ and $x\in\mathbb{R},$ then the following Tur\'an type
inequalities are valid
\begin{equation}\label{turanpara}
0<D_{-a}^2(x)-D_{-a-1}(x)D_{-a+1}(x)\leq \mu_a,
\end{equation}
where
$$\mu_a=\frac{\pi}{2^a}\left[\frac{1}{\Gamma^2\left(\frac{a+1}{2}\right)}-
\frac{1}{\Gamma\left(\frac{a}{2}\right)\Gamma\left(\frac{a}{2}+1\right)}\right].$$
The left-hand side of \eqref{turanpara} is sharp as $|x|\to\infty$ and it is also valid when $a=0$ and $x>0.$ The equality is
attained on the right-hand side of \eqref{turanpara} when $x=0.$
\end{theorem}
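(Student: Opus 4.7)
The plan is to prove both bounds via a single structural observation: the function $\Delta_a(x) := D_{-a}^2(x) - D_{-a-1}(x) D_{-a+1}(x)$ is strictly monotonic in $x$, so both sides of \eqref{turanpara} can be read off from its boundary values.

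First I would compute $\Delta_a'(x)$. Applying the standard derivative relations $D_\nu'(x) = \tfrac{x}{2}D_\nu(x) - D_{\nu+1}(x)$ and $D_\nu'(x) = -\tfrac{x}{2}D_\nu(x) + \nu D_{\nu-1}(x)$ (with $\nu = -a, -a-1, -a+1$) together with the three-term recurrence $D_{-a+1}(x) = x D_{-a}(x) + a D_{-a-1}(x)$, and organizing the cross terms carefully, the expression collapses to the compact identity
\[
\Delta_a'(x) = -D_{-a}(x)\, D_{-a-1}(x).
\]
This identity is the main computational step.

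Next I would invoke Kelker's integral representation
\[
D_{-a}(x) = \frac{e^{-x^2/4}}{\Gamma(a)} \int_0^\infty t^{a-1} e^{-xt - t^2/2}\, dt \qquad (a > 0),
\]
whose integrand is everywhere positive, to conclude $D_{-a}(x) > 0$ and (applying the same formula with $a$ replaced by $a+1$) $D_{-a-1}(x) > 0$ for every real $x$. Hence $\Delta_a'(x) < 0$, and $\Delta_a$ is strictly decreasing.

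Finally I would analyse the boundary values. At $x=0$, plugging $D_\nu(0) = \sqrt{\pi}\, 2^{\nu/2}/\Gamma((1-\nu)/2)$ into each factor yields $\Delta_a(0) = \mu_a$, and $\mu_a > 0$ follows from the log-convexity of $\Gamma$ applied to the pair $(a/2, a/2+1)$. As $x \to +\infty$, the standard asymptotic $D_\nu(x) \sim x^\nu e^{-x^2/4}\bigl[1 - \nu(\nu-1)/(2x^2) + O(x^{-4})\bigr]$ makes the leading $x^{-2a} e^{-x^2/2}$ contributions cancel between $D_{-a}^2(x)$ and $D_{-a-1}(x)D_{-a+1}(x)$, leaving $\Delta_a(x) = O(x^{-2a-2} e^{-x^2/2}) \to 0$. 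Combining the monotonicity with these two boundary values delivers $0 < \Delta_a(x) \leq \mu_a$, with equality at $x = 0$ on the right and the lower bound attained in the limit. The main obstacle is obtaining the compact form $\Delta_a'(x) = -D_{-a}(x)D_{-a-1}(x)$; once that identity is in hand, the remainder is routine boundary analysis drawing on the well-known positivity and asymptotic behaviour of the parabolic cylinder function.
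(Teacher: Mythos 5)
Your central identity $\Delta_a'(x)=-D_{-a}(x)D_{-a-1}(x)$ is correct, and it is in fact exactly what the paper's derivative computation produces (in the form $\Delta_a'(x)=-D_{-a}^2(x)\,D_{-a-1}(x)/D_{-a}(x)$) before the integral representation is inserted; your evaluation $\Delta_a(0)=\mu_a$ and the decay $\Delta_a(x)\to0$ as $x\to+\infty$ are also fine, and monotone decrease plus that limit does give the lower bound $\Delta_a(x)>0$ on all of $\mathbb{R}$. The genuine gap is the upper bound for $x<0$: since $D_{-a}$ and $D_{-a-1}$ are strictly positive on all of $\mathbb{R}$ (as your Laplace-type integral shows), your identity says $\Delta_a$ is strictly decreasing on the whole real line, and a strictly decreasing function with $\Delta_a(0)=\mu_a$ satisfies $\Delta_a(x)>\mu_a$ for every $x<0$. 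So your final sentence, claiming $0<\Delta_a(x)\le\mu_a$ for all real $x$ with a maximum at $x=0$, contradicts your own derivative formula; no argument built on that identity alone can make $x=0$ a global maximum. A quick check with $a=1$ (where $D_0(x)=e^{-x^2/4}$, $D_{-1}(x)=\sqrt{\pi/2}\,e^{x^2/4}\ec(x/\sqrt{2})$, $D_{-2}(x)=D_0(x)-xD_{-1}(x)$) gives $\Delta_1(-1)\approx 4.6$ while $\mu_1=\pi/2-1\approx0.57$, so the negative-$x$ half of the upper bound cannot be rescued along your route.

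This is precisely where the paper's argument diverges from yours: it replaces the ratio $D_{-a-1}(x)/D_{-a}(x)$ by the Ismail--Kelker Stieltjes transform \eqref{quopara}, arriving at $\Delta_a'(x)=-D_{-a}^2(x)\int_0^\infty \frac{x}{x^2+t}\,\varphi_a(t)\,\dt$, whose sign changes at $x=0$, so that $\Delta_a$ is claimed to increase on $(-\infty,0]$ and decrease on $[0,\infty)$ with maximum $\mu_a$ at the origin. Note, however, that \eqref{quopara} is a representation in $z$ with $\sqrt{z}>0$, i.e. as a function of real $x$ it applies only for $x>0$ (its right-hand side is even in $x$, whereas the ratio $D_{-a-1}(x)/D_{-a}(x)$ stays positive for $x<0$), so reconciling your (correct) sign of $\Delta_a'$ on $x<0$ with the stated theorem is exactly the issue you must confront; as written, your proof establishes the theorem only on $x\ge0$. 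Two smaller points: you never address the $a=0$, $x>0$ case of the left-hand inequality, which the paper treats separately via Gordon's Mills-ratio inequality (your identity does extend there since $D_0(x)=e^{-x^2/4}>0$ and $D_{-1}>0$, but you should say so), and the Laplace-type integral you use for positivity is the classical representation of $D_{-a}$, not the Ismail--Kelker formula, which is the Stieltjes transform of the quotient.
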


\begin{proof}[\bf Proof]
Let us consider the Tur\'anian
\begin{align*}{}_D\Delta_a(x):&=D_{-a}^2(x)-D_{-a-1}(x)D_{-a+1}(x)\\&=
U^2\left(a-\frac{1}{2},x\right)-U\left(a-\frac{3}{2},x\right)U\left(a+\frac{1}{2},x\right),\end{align*}
which in view of the differential recurrence relations \cite[p. 688]{abra}
$$U'(a,x)+\frac{x}{2}U(a,x)+\left(a+\frac{1}{2}\right)U(a+1,x)=0$$
and
$$U'(a,x)-\frac{x}{2}U(a,x)+U(a-1,x)=0$$
can be rewritten as
$${}_D\Delta_a(x)=\left(1+\frac{x^2}{4a}\right)U^2\left(a-\frac{1}{2},x\right)-
\frac{1}{a}\left[U'\left(a-\frac{1}{2},x\right)\right]^2.$$ On the
other hand, since $U(a,x)$ satisfies the Weber differential equation
\eqref{eqweber}, we obtain
$$U''\left(a-\frac{1}{2},x\right)=\left(a-\frac{1}{2}+\frac{x^2}{4}\right)U\left(a-\frac{1}{2},x\right).$$
Moreover, the following integral representation formula \cite[p.
885]{kelker} is valid
\begin{equation}\label{quopara}
\frac{D_{-a-1}(\sqrt{z})}{\sqrt{z}D_{-a}(\sqrt{z})}=\frac{1}{\sqrt{2\pi}\Gamma(a+1)}\int_0^{\infty}
\frac{\left|D_{-a}(\mathrm{i}\sqrt{t})\right|^{-2}}{(z+t)\sqrt{t}}\dt,
\end{equation}
where $a>0$ and $|\arg z|<\pi.$ Now, by using \eqref{quopara} we get
\begin{align*}{}_D\Delta_a'(x)&=\frac{1}{a}U^2\left(a-\frac{1}{2},x\right)
\left[\frac{x}{2}+\frac{U'\left(a-\frac{1}{2},x\right)}{U\left(a-\frac{1}{2},x\right)}\right]\\
&=-U^2\left(a-\frac{1}{2},x\right)\left[\frac{U\left(a+\frac{1}{2},x\right)}{U\left(a-\frac{1}{2},x\right)}\right]\\
&=-D^2_{-a}(x)\left[\frac{D_{-a-1}(x)}{D_{-a}(x)}\right]\\
&=-D^2_{-a}(x)\int_0^{\infty}\frac{x}{x^2+t}\varphi_a(t)\dt,\end{align*}
where
$$\varphi_a(t)=\frac{\left|D_{-a}(\mathrm{i}\sqrt{t})\right|^{-2}}{\sqrt{2\pi}\Gamma(a+1)\sqrt{t}}.$$
Thus, the function $x\mapsto {}_D\Delta_a(x)$ is increasing on
$(-\infty,0]$ and decreasing on $[0,\infty),$ and consequently by
using \cite[p. 687]{abra}
$$U(a,0)=\frac{\sqrt{\pi}}{2^{\frac{a}{2}+\frac{1}{4}}\Gamma\left(\frac{a}{2}+\frac{3}{4}\right)}$$
we have ${}_D\Delta_a(x)\leq {}_D\Delta_a(0)=\mu_a$ for all
$x\in\mathbb{R}$ and $a>0.$ This proves the inequality on the right-hand side of
\eqref{turanpara}. Now, for the inequality on the  left-hand side of \eqref{turanpara}
recall that for $|\arg z|<\pi/2$ we have \cite[p. 689]{abra}
$$\lim_{|z|\to\infty}\frac{U(a,z)}{e^{-\frac{z^2}{4}}z^{-a-\frac{1}{2}}}=1,$$
and then $\lim\limits_{|x|\to\infty}{}_D\Delta_a(x)=0,$ which
completes the proof when $a>0.$

Finally, recall that \cite[p. 692]{abra}
$$U\left(-\frac{1}{2},x\right)=e^{-\frac{x^2}{4}},\ \ U\left(-\frac{3}{2},x\right)=xe^{-\frac{x^2}{4}}$$
and $$U\left(\frac{1}{2},x\right)=\sqrt{\frac{\pi}{2}}e^{\frac{x^2}{4}}\ec\left(\frac{x}{\sqrt{2}}\right),$$
where $\ec,$ defined by
$$\ec(x)=\frac{2}{\sqrt{\pi}}\int_x^{\infty}e^{-t^2}\dt,$$
denotes the complementary error function. Thus, we obtain for all $x>0$ that
$${}_D\Delta_{0}(x)=D_0^2(x)-D_{-1}(x)D_1(x)=e^{-\frac{x^2}{2}}-x\int_x^{\infty}e^{-\frac{t^2}{2}}\dt>0,$$
which is exactly the upper bound inequality of Gordon \cite{gordon} for the Mills ratio of the standard normal distribution. For more details see also \cite[p. 1363]{mills}. This completes the proof.
\end{proof}

\begin{remark}
{\em We mention that recently, by using a different approach, Segura \cite[Theorem 11]{segura2} proved the inequalities
$$1<\frac{D_{-a}^2(x)}{D_{-a-1}(x)D_{-a+1}(x)}<\sqrt{\frac{a+1}{a-1}},$$
where $a>0$ and $x\in\mathbb{R}$ on the left-hand side, and $a>1$ and $x\in\mathbb{R}$ on the right-hand side. We also note here that the inequality on the left-hand side of \eqref{turanpara} complements the Tur\'an type inequality for Hermite
polynomials (see \cite{lak,madhava,szego})
$$H_n^2(x)-H_{n-1}(x)H_{n+1}(x)\geq0,$$
which is valid for all real $x$ and $n\in\{1,2,\dots\}.$ Indeed, if
$n$ is a non-negative integer, then by using the relation \cite[p.
780]{abra}
$$D_n(x)=2^{-\frac{n}{2}}e^{-\frac{x^2}{4}}H_n\left(\frac{x}{\sqrt{2}}\right),$$
the above Tur\'an type inequality becomes
$$D_n^2(x)-D_{n-1}(x)D_{n+1}(x)\geq0,$$
where $n\in\{1,2,\dots\}$ and $x\in\mathbb{R}.$ The left-hand side
of \eqref{turanpara} complements this inequality.

Now, let us consider the cases when $a\in\{-3/2,-1/2\}.$ Observe that in view of \cite[p. 692]{abra}
$$U(0,x)=\sqrt{\frac{x}{2\pi}}K_{\frac{1}{4}}\left(z\right),\ \ U(-1,x)=\frac{x\sqrt{x}}{2\sqrt{2\pi}}\left[K_{\frac{1}{4}}\left(z\right)+K_{\frac{3}{4}}\left(z\right)\right],$$
$$U(-2,x)=\frac{x^2\sqrt{x}}{4\sqrt{2\pi}}\left[2K_{\frac{1}{4}}\left(z\right)+3K_{\frac{3}{4}}\left(z\right)-K_{\frac{5}{4}}\left(z\right)\right]$$
and
$$U(-3,x)=\frac{x^3\sqrt{x}}{8\sqrt{2\pi}}\left[5K_{\frac{1}{4}}\left(z\right)+9K_{\frac{3}{4}}\left(z\right)-5K_{\frac{5}{4}}\left(z\right)-K_{\frac{7}{4}}\left(z\right)\right]$$
we have that
\begin{align*}{}_D\Delta_{-\frac{3}{2}}(x)&=U^2(-2,x)-U(-3,x)U(-1,x)\\
&=\frac{x^5}{32\pi}\left[K_{\frac{3}{4}}\left(z\right)\left(K_{\frac{7}{4}}\left(z\right)-K_{\frac{5}{4}}\left(z\right)\right)+
K_{\frac{5}{4}}^2\left(z\right)-K_{\frac{1}{4}}^2\left(z\right)\right.\\&\left.+K_{\frac{1}{4}}\left(z\right)\left(K_{\frac{5}{4}}\left(z\right)+K_{\frac{7}{4}}\left(z\right)-2K_{\frac{3}{4}}\left(z\right)\right)\right],\end{align*}
and
\begin{align*}{}_D\Delta_{-\frac{1}{2}}(x)&=U^2(-1,x)-U(-2,x)U(0,x)\\
&=\frac{x^3}{8\pi}\left[K_{\frac{1}{4}}\left(z\right)\left(K_{\frac{5}{4}}\left(z\right)-K_{\frac{3}{4}}\left(z\right)\right)+
K_{\frac{3}{4}}^2\left(z\right)-K_{\frac{1}{4}}^2\left(z\right)\right],\end{align*}
where $z=x^2/4$ and $K_a$ stands for the modified Bessel function of the second kind.
By using the known fact (see \cite{laforgia}) that $a\mapsto K_a(x)$ is increasing on $(0,\infty)$ for each fixed $x>0,$ it follows that ${}_D\Delta_{-\frac{3}{2}}(x)>0$ and ${}_D\Delta_{-\frac{1}{2}}(x)>0$ for all $x>0.$ Numerical experiments and the above results for $a\in\{-3/2,-1/2,0\}$ suggest that the left-hand side of the inequality \eqref{turanpara} is also valid for all $a\leq0$ and $x>0,$ however, we were unable to prove this.}
\end{remark}

\section{Tur\'an type inequalities for Tricomi $\psi$ function}
\setcounter{equation}{0}

The Tricomi's confluent hypergeometric function, also called
confluent hypergeometric function of the second kind,
$\psi(a,c,\cdot)$ is a particular solution of the confluent
hypergeometric  differential equation (see \cite[p. 504]{abra} or
\cite[p. 248]{magnus})
\begin{equation}\label{eqkummer}
xw''(x)+(c-x)w'(x)-aw(x)=0
\end{equation}
and its value is defined in terms of the Kummer confluent
hypergeometric function as
$$\psi(a,c,x)=\frac{\Gamma(1-c)}{\Gamma(a-c+1)}\Phi(a,c,x)+
\frac{\Gamma(c-1)}{\Gamma(a)}x^{1-c}\Phi(a-c+1,2-c,x).$$

Now, recall the following Tur\'an type inequalities, which hold for
all $a>1$ and $x>0$
\begin{equation}\label{tmb}
\frac{1}{1-a}K_{a}^2(x)<K_{a}^2(x)-K_{a-1}(x)K_{a+1}(x)<0.
\end{equation}
Moreover, the right-hand side of \eqref{tmb} holds true for all
$a\in\mathbb{R}.$ These inequalities are sharp in the sense that the
constants $1/(1-a)$ and $0$ are best possible.

For the sake of completeness it should be mentioned that the
right-hand side of \eqref{tmb} was first proved independently by
Ismail and Muldoon \cite{muldoon} and van Haeringen \cite{Har}, and
rediscovered later by Laforgia and Natalini \cite{natalini}. Note
that in \cite{muldoon} the authors actually proved that for all
fixed $x>0$ and $b>0,$ the function $a\mapsto K_{a+b}(x)/K_{a}(x)$
is increasing on $\mathbb{R}.$ Another proof of the right-hand side
of \eqref{tmb}, with proper credit, is in \cite{bariczstudia}.
Recently, Baricz \cite{bams} and Segura \cite{segura}, proved the
two sided inequality in \eqref{tmb} by using different approaches.
 See also \cite{bepe} for more
details on \eqref{tmb}. We also note here that the left-hand side of
\eqref{tmb} provides actually an upper bound for the effective
variance of the generalized Gaussian distribution. More precisely,
in \cite{lacis} the authors used (without proof) the inequality
$0<v_{\rm{eff}}<1/(\mu-1)$ for $\mu=a+4,$ where
$$v_{\rm{eff}}=\frac{K_{\mu-1}(x)K_{\mu+1}(x)}{K_{\mu}^2(x)}-1$$
is the effective variance of the generalized Gaussian distribution.

Observe that by using the relation \cite[p. 510]{abra}
$$K_{a}(x)=\sqrt{\pi}(2x)^{a}e^{-x}\psi\left(a+\frac{1}{2},2a+1,2x\right)$$
the Tur\'an type inequality \eqref{tmb} for $a>1$ and $x>0$ can be
rewritten as
\begin{equation}\label{turanhankel}\frac{1}{1-a}\psi^2\left(a+\frac{1}{2},2a+1,x\right)<\Delta_a(x)<0,\end{equation}
where
$$\Delta_a(x):=\psi^2\left(a+\frac{1}{2},2a+1,x\right)-
\psi\left(a-\frac{1}{2},2a-1,x\right)\psi\left(a+\frac{3}{2},2a+3,x\right).$$

The next result complements  the above inequality.

\begin{theorem}
If $a>0>c$ and $x>0,$ then the following sharp Tur\'an type
inequalities are valid
\begin{equation}\label{turantrico}
\frac{1}{c}\psi^2(a,c,x)<\psi^2(a,c,x)-\psi(a-1,c-1,x)\psi(a+1,c+1,x)<0.
\end{equation}
Moreover, the right-hand side of \eqref{turantrico} holds true for
all $a>0,$ $c<1$ and $x>0.$ These inequalities are sharp in the sense that the
constants $1/c$ and $0$ are best possible.
\end{theorem}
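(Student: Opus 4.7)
The plan is to parallel the argument of Theorem \ref{th1}. Define
$$\Delta_{a,c}(x) := \psi^2(a,c,x) - \psi(a-1,c-1,x)\psi(a+1,c+1,x)$$
and the normalized Turanian $F_{a,c}(x) := \Delta_{a,c}(x)/\psi^2(a,c,x)$. The two inequalities in \eqref{turantrico} collapse into the single statement $1/c < F_{a,c}(x) < 0$, and since
$$F_{a,c}(x) = 1 - \frac{\psi(a-1,c-1,x)\psi(a+1,c+1,x)}{\psi^2(a,c,x)},$$
the problem reduces to showing that the ratio on the right is strictly decreasing in $x$, with endpoint values that pin down both sharp constants.

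The first step is monotonicity. I would differentiate $F_{a,c}$ using $\psi'(a,c,x) = -a\psi(a+1,c+1,x)$ and the contiguous relations for $\psi$ derived from Kummer's equation \eqref{eqkummer}, and reorganize $F'_{a,c}(x)$ in terms of the logarithmic derivatives of the two quotients $\psi(a+1,c+1,x)/\psi(a,c,x)$ and $\psi(a,c,x)/\psi(a-1,c-1,x)$. The Tricomi analogue of Kelker's formula \eqref{quopara}, from the same source \cite{kelker} used in Section 2, represents each of these quotients as a Stieltjes transform of a nonnegative measure on $(0,\infty)$ in the range $a>0$, $c<1$. Differentiating under the integral sign and combining then fixes the sign of $F'_{a,c}$, yielding that $F_{a,c}$ is strictly increasing on $(0,\infty)$.

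The second step is endpoint evaluation. As $x\to\infty$ the two leading orders in the asymptotic expansion $\psi(a,c,x)\sim x^{-a}[1-a(1+a-c)/x+O(x^{-2})]$ cancel out of $\Delta_{a,c}$, leaving $\Delta_{a,c}(x)\sim -(1+a-c)\,x^{-2a-2}$; hence $F_{a,c}(\infty)=0$, which together with monotonicity already delivers the upper inequality $\Delta_{a,c}(x)<0$ in the full range $a>0$, $c<1$. When additionally $c<0$ the defining series for $\psi$ yields the finite limits
$$\psi(a,c,0^+) = \frac{\Gamma(1-c)}{\Gamma(a-c+1)},\quad \psi(a-1,c-1,0^+) = \frac{\Gamma(2-c)}{\Gamma(a-c+1)},\quad \psi(a+1,c+1,0^+) = \frac{\Gamma(-c)}{\Gamma(a-c+1)},$$
and the identities $\Gamma(2-c)=(1-c)\Gamma(1-c)$ and $\Gamma(1-c)=(-c)\Gamma(-c)$ collapse the ratio at $0^+$ to $F_{a,c}(0^+) = 1 - (c-1)/c = 1/c$. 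Combined with the monotonicity, this gives the lower inequality and the sharpness of $1/c$; sharpness of $0$ is witnessed by the limit at infinity.

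The main obstacle is locating and correctly applying the Tricomi counterpart of \eqref{quopara}: once that representation is in hand, the monotonicity of $F_{a,c}$ drops out routinely, but verifying that the nonnegativity of the Kelker measures propagates to $F'_{a,c}$ rather than merely to $\Delta'_{a,c}$ is what turns the argument into a sharp two-sided bound instead of a one-sided one.
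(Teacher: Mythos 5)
Your proposal is correct and follows essentially the paper's own route: normalize the Turanian by $\psi^2(a,c,x)$, use the contiguous relations and the confluent equation together with Kelker's Stieltjes representation \eqref{quotrico} to obtain strict monotonicity in $x$ of ${}_{\psi}\Delta_{a,c}(x)/\psi^2(a,c,x)$, and read off the sharp constants $1/c$ and $0$ from the limits at $0^+$ (for $c<0$) and at $\infty$ via \eqref{asymp2} and \eqref{asymp1}. The only cosmetic difference is that the paper compresses your first step into the single identity ${}_{\psi}\Delta_{a,c}(x)/\psi^2(a,c,x)=-\left[x\psi(a+1,c+1,x)/\psi(a,c,x)\right]'$, so one application of \eqref{quotrico} yields both the negativity and the monotonicity at once, with no need for a representation of the second quotient $\psi(a,c,x)/\psi(a-1,c-1,x)$.
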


\begin{proof}[\bf Proof]
First consider the expression
\begin{align*}
{}_{\psi}\Delta_{a,c}(x):=\psi^2(a,c,x)-\psi(a-1,c-1,x)\psi(a+1,c+1,x),
\end{align*}
which in view of the relations \cite[p. 507]{abra}
$$\psi'(a,c,x)=-a\psi(a+1,c+1,x),$$
$$\psi(a-1,c-1,x)=(1-c+x)\psi(a,c,x)-x\psi'(a,c,x)$$
can be rewritten as
$${}_{\psi}\Delta_{a,c}(x)=\psi^2(a,c,x)+\frac{1}{a}
(1-c+x)\psi(a,c,x)\psi'(a,c,x)-\frac{x}{a}\left[\psi'(a,c,x)\right]^2.$$
On the other hand, because $\psi(a,c,x)$ satisfies the confluent
differential equation \eqref{eqkummer}, we obtain
\begin{equation}\label{fo}\left[\frac{x\psi'(a,c,x)}{\psi(a,c,x)}\right]'=
(1+x-c)\frac{\psi'(a,c,x)}{\psi(a,c,x)}+a-x\left[\frac{\psi'(a,c,x)}{\psi(a,c,x)}\right]^2\end{equation}
and conclude that
$$\frac{{}_{\psi}\Delta_{a,c}(x)}{\psi^2(a,c,x)}=\frac{1}{a}\left[\frac{x\psi'(a,c,x)}{\psi(a,c,x)}\right]'
=-\left[\frac{x\psi(a+1,c+1,x)}{\psi(a,c,x)}\right]'.$$ For $|\arg
z|<\pi,$ $a>0$ and $c<1$ the integral representation \cite[p.
885]{kelker}
\begin{equation}\label{quotrico}
\frac{\psi(a+1,c+1,z)}{\psi(a,c,z)}=\int_0^{\infty}\frac{t^{-c}e^{-t}
\left|\psi(a,c,te^{\mathrm{i}\pi})\right|^{-2}}
{(z+t)\Gamma(a+1)\Gamma(a-c+1)}\dt
\end{equation}
is valid. By using the notation
$$\varphi_{a,c}(t):=\frac{t^{-c}e^{-t}\left|\psi(a,c,te^{\mathrm{i}\pi})\right|^{-2}}
{\Gamma(a+1)\Gamma(a-c+1)},$$ \eqref{quotrico} implies that
$$\frac{{}_{\psi}\Delta_{a,c}(x)}{\psi^2(a,c,x)}=-\left[\int_0^{\infty}\frac{x}{x+t}\varphi_{a,c}(t)\dt\right]'
=-\int_0^{\infty}\frac{t\varphi_{a,c}(t)\dt}{(x+t)^2}<0$$ and
$$\left[\frac{{}_{\psi}\Delta_{a,c}(x)}{\psi^2(a,c,x)}\right]'
=-\left[\int_0^{\infty}\frac{t\varphi_{a,c}(t)\dt}{(x+t)^2}\right]'
=\int_0^{\infty}\frac{2t\varphi_{a,c}(t)\dt}{(x+t)^3}>0$$ for all
$a>0,$ $c<1$ and $x>0.$ Thus, the function $x\mapsto
{{}_{\psi}\Delta_{a,c}(x)}/{\psi^2(a,c,x)}$ maps $(0,\infty)$ into
$(-\infty,0)$ and it is strictly increasing. Hence, we obtain for all
$a>0,$ $c<1$ and $x>0$
$$\alpha_{a,c}:=\lim_{x\to 0}\frac{{}_{\psi}\Delta_{a,c}(x)}{\psi^2(a,c,x)}
<\frac{{}_{\psi}\Delta_{a,c}(x)}{\psi^2(a,c,x)}
<\lim_{x\to\infty}\frac{{}_{\psi}\Delta_{a,c}(x)}{\psi^2(a,c,x)}=:\beta_{a,c}.$$
The asymptotic expansion \cite[p. 508]{abra}
\begin{equation}\label{asymp1}\psi(a,c,x)\sim
x^{-a}\left(1+a(c-a-1)\frac{1}{x}+\frac{1}{2}a(a+1)(a+1-c)(a+2-c)\frac{1}{x^2}+\dots\right),\end{equation}
which is valid for large real $x$ and fixed $a$ and $c,$ implies
that $\beta_{a,c}=0.$ Similarly, by using the asymptotic expansion
\cite[p. 508]{abra} \begin{equation}\label{asymp2}\psi(a,c,x)\sim
\frac{\Gamma(1-c)}{\Gamma(1+a-c)},\end{equation} where $c<1$ and
$a>0$ are fixed and $x\to 0,$ we see that $\alpha_{a,c}=1/c$ for
$c<0.$ It is clear by construction that the constants $\alpha_{a,c}$
and $\beta_{a,c}$ are best possible.
\end{proof}

\begin{remark}
{\em Observe that by using \cite[p. 505]{abra}
\begin{equation}\label{whittaker}W_{\kappa,\mu}(x)=
\exp\left(-\frac{x}{2}\right)x^{\mu+\frac{1}{2}}\psi\left(\mu-\kappa+\frac{1}{2},1+2\mu,x\right),\end{equation}
for $x>0$ the inequality \eqref{turantrico} can be rewritten in
terms of Whittaker functions $W_{\kappa,\mu}$ as follows
\begin{equation}\label{turanwhitta}\frac{1}{1+2\mu}W_{\kappa,\mu}^2(x)<W_{\kappa,\mu}^2(x)-
W_{\kappa+\frac{1}{2},\mu-\frac{1}{2}}(x)W_{\kappa-\frac{1}{2},\mu+\frac{1}{2}}(x)<0,\end{equation}
where the left-hand side holds for $0>\mu+1/2>\kappa,$ while the
right-hand side is valid for $1/2>\mu+1/2>\kappa.$}
\end{remark}

\begin{remark}\label{remholder}
{\em We note that by using \eqref{quotrico} directly we can prove a
weaker Tur\'an type inequality than the right-hand side of
\eqref{turantrico}. More precisely, because of \eqref{quotrico} (see
also \cite[p. 889]{kelker}) the function
$$x\mapsto G(x):=-\frac{\psi'(a,c,x)}{\psi(a,c,x)}=\frac{a\psi(a+1,c+1,x)}{\psi(a,c,x)}$$
is a Stieltjes transform, and consequently it is strictly completely
monotonic, i.e. for all $a>0,$ $c<1$ and $x>0$ we have
$(-1)^nG^{(n)}(x)>0,$ which in particular implies that the function
$x\mapsto{\psi'(a,c,x)}/{\psi(a,c,x)}$ is increasing and then the
Laguerre type inequality
$${\psi''(a,c,x)}{\psi(a,c,x)}-\left[{\psi'(a,c,x)}\right]^2< 0$$
is valid. This is equivalent to
$$a{\psi^2(a+1,c+1,x)}-(a+1){\psi(a,c,x)}{\psi(a+2,c+2,x)}<0$$
or to
\begin{equation}\label{turantrico2}{\psi^2(a,c,x)}-
{\psi(a-1,c-1,x)}{\psi(a+1,c+1,x)}<\frac{1}{a}{\psi^2(a,c,x)},\end{equation}
where $a>1,$ $c<2$ and $x>0.$

Moreover, by using \cite[p. 505]{abra}
\begin{equation}\label{intlap}\psi(a,c,x)=\frac{1}{\Gamma(a)}\int_0^{\infty}e^{-xt}t^{a-1}(1+t)^{c-a-1}\dt,\end{equation}
the restriction $c<2$ in the inequality \eqref{turantrico2} can be
removed. More precisely, the H\"older-Rogers inequality for
integrals implies for all $a_1,a_2>0,$ $c_1,c_2\in\mathbb{R},$ $x>0$
and $\alpha\in[0,1]$
\begin{align*}
\Gamma&(\alpha a_1+(1-\alpha)a_2)\psi(\alpha
a_1+(1-\alpha)a_2,\alpha
c_1+(1-\alpha)c_2,x)\\&=\int_0^{\infty}e^{-xt}t^{\alpha
a_1+(1-\alpha)a_2-1}(1+t)^{\alpha c_1+(1-\alpha)c_2-(\alpha
a_1+(1-\alpha)a_2)-1}\dt\\
&=\int_0^{\infty}\left(e^{-xt}t^{a_1-1}(1+t)^{c_1-a_1-1}\right)^{\alpha}
\left(e^{-xt}t^{a_2-1}(1+t)^{c_2-a_2-1}\right)^{1-\alpha}\dt\\
&< \left[\int_0^{\infty}e^{-xt}t^{a_1-1}(1+t)^{c_1-a_1-1}\dt
\right]^{\alpha}\left[\int_0^{\infty}e^{-xt}t^{a_2-1}(1+t)^{c_2-a_2-1}\dt\right]^{1-\alpha}\\
&=\left[\Gamma(a_1)\psi(a_1,c_1,x)\right]^{\alpha}\left[\Gamma(a_2)\psi(a_2,c_2,x)\right]^{1-\alpha}
\end{align*}
and then the two-variable function $(a,c)\mapsto
\Gamma(a)\psi(a,c,x)$ is strictly logarithmically convex for each
$a,x>0$ and $c\in\mathbb{R}.$ Now, observe that the above inequality
in particular for $\alpha=1/2,$ $a_1=a-1,$ $a_2=a+1,$ $c_1=c-1$ and
$c_2=c+1$ reduces to \eqref{turantrico2}.}
\end{remark}

\begin{remark}\label{remarkholder}
{\em Observe that by using the above idea mutatis mutandis the
function $a\mapsto \Gamma(a)\psi(a,c,x)$ is also strictly
logarithmically convex for $a,x>0$ and $c\in\mathbb{R},$ and
consequently the Tur\'an type inequality
\begin{equation}\label{eqrem1}{\psi^2(a,c,x)}-
{\psi(a-1,c,x)}{\psi(a+1,c,x)}<\frac{1}{a}{\psi^2(a,c,x)}\end{equation}
is valid for all $a>1$ and $c,x\in\mathbb{R}.$ Taking into account
the relation \cite[p. 510]{abra}
$$D_{-a}(x)=2^{-\frac{a}{2}}\exp\left(-\frac{x^2}{4}\right)
\psi\left(\frac{a}{2},\frac{1}{2},\frac{x^2}{2}\right)$$
the above inequality in particular reduces to
\begin{equation}\label{eqrem2}D_{-2a}^2(x)-D_{-2a-2}(x)D_{-2a+2}(x)<\frac{1}{a}D_{-2a}^2(x),\end{equation}
which resembles to \eqref{turanpara}. However, in the above Tur\'an
type inequalities the constant $1/a$ is not best possible, as we
shall see below.}
\end{remark}

The next result is similar to \eqref{turantrico}.

\begin{theorem}
If $a>1>c$ and $x>0,$ then the next sharp Tur\'an type inequality is
valid
\begin{equation}\label{turantricom}\frac{1}{1+a-c}{\psi^2(a,c,x)}>
{\psi^2(a,c,x)}-{\psi(a-1,c,x)}{\psi(a+1,c,x)}>0.\end{equation}
Moreover, the right-hand side of \eqref{turantricom} holds true for
all $a>0,$ $c<1$ and $x>0.$ These inequalities are sharp in the sense that the
constants $1/(1+a-c)$ and $0$ are best possible.
\end{theorem}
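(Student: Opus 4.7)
The plan is to analyze $T^*(x) := {}_\psi\Delta^*_{a,c}(x)/\psi^2(a,c,x)$, where ${}_\psi\Delta^*_{a,c}(x) = \psi^2(a,c,x) - \psi(a-1,c,x)\psi(a+1,c,x)$, in the spirit of the proofs of Theorem~3 and Remark~4. The two parts of the statement will be proved by essentially different arguments: the upper bound by a sharpened H\"older estimate as in Remark~4, and the positivity by a monotonicity argument in the style of Theorem~3. The key structural ingredients are Kummer's transformation $\psi(a,c,x) = x^{1-c}\psi(1+a-c,2-c,x)$ and the contiguous relation $\psi(a,c,x) - a\psi(a+1,c,x) = \psi(a,c-1,x)$, which is immediate from \eqref{intlap} by rewriting the integrand with the identity $(1+t)-t=1$.

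For the upper bound $T^*(x) < 1/(1+a-c)$ (requiring $a > 1 > c$), I apply the H\"older argument of Remark~4 to the representation
$$\Gamma(1+a-c)\psi(a,c,x) = x^{1-c}\int_0^\infty e^{-xt}t^{-c}\bigl[t/(1+t)\bigr]^a\,\dt,$$
obtained by combining the Kummer transformation with \eqref{intlap}. The integrand factors as the $a$-independent weight $e^{-xt}t^{-c}$ times the log-linear-in-$a$ function $[t/(1+t)]^a$, so H\"older's inequality yields that $a \mapsto \Gamma(1+a-c)\psi(a,c,x)$ is strictly logarithmically convex. Specializing to $\alpha = 1/2$, $a_1 = a-1$, $a_2 = a+1$ and using the identity $\Gamma(a-c)\Gamma(2+a-c)/\Gamma^2(1+a-c) = (1+a-c)/(a-c)$ gives $\psi(a-1,c,x)\psi(a+1,c,x) > \tfrac{a-c}{1+a-c}\psi^2(a,c,x)$, which rearranges to the claimed upper bound.

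For the positivity $T^*(x) > 0$ (requiring only $a > 0$ and $c < 1$), I mirror the strategy of Theorem~3. The three-term recurrence $\psi(a-1,c,x) = (2a-c+x)\psi(a,c,x) - a(a-c+1)\psi(a+1,c,x)$ leads to
$$T^*(x) = 1 - (2a-c+x)R(x) + a(a-c+1)R^2(x),\qquad R(x) = \frac{\psi(a+1,c,x)}{\psi(a,c,x)},$$
and the contiguous identity gives $R(x) = J(x)/(aJ(x)+1)$ with $J(x) = \psi(a+1,c,x)/\psi(a,c-1,x)$. Applying \eqref{quotrico} with $(a,c-1)$ in place of $(a,c)$, valid since $a > 0$ and $c-1 < 1$, exhibits $J(x)$ as a Stieltjes transform of a positive measure, hence strictly decreasing and completely monotonic on $(0,\infty)$. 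Likewise \eqref{quotrico} at $(a-1,c-1)$ yields a Stieltjes representation for $K(x) = \psi(a,c,x)/\psi(a-1,c-1,x)$, and the identity $\psi(a-1,c,x) = (a-1)\psi(a,c,x) + \psi(a-1,c-1,x)$ gives $L(x) = \psi(a-1,c,x)/\psi(a,c,x) = (a-1) + 1/K(x)$. I use the Stieltjes structures of $J$ and $K$ to show that $T^*(x)$ is strictly decreasing on $(0,\infty)$. The boundary values $T^*(0^+) = 1/(1+a-c)$, computed from $\psi(a,c,0) = \Gamma(1-c)/\Gamma(1+a-c)$, and $T^*(\infty) = 0$, obtained from $\psi(a,c,x) \sim x^{-a}$ as $x \to \infty$, together with the monotonicity then give simultaneously the strict positivity of $T^*$ and the sharpness of both constants.

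The principal obstacle will be verifying the strict monotonicity of $T^*$. Differentiating the quadratic-in-$R$ representation produces $(T^*)'(x) = -R(x) + R'(x)[2a(a-c+1)R(x) - (2a-c+x)]$, whose sign is not manifest from this formula alone; pinning it down requires substituting $R = J/(aJ+1)$, exploiting that $J$ is a Stieltjes transform (and hence that $R'$ and $J'$ have a known sign and a tractable integral form), and performing a careful computation analogous to the last step in Theorem~3's proof, where the sign of $(xI(x))'$ was deduced directly from \eqref{quotrico}.
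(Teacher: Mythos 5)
Your first half is fine, but the second half has a genuine gap: the positivity (the right-hand side of \eqref{turantricom}) and the sharpness of both constants are made to rest on the strict monotonicity of $T^*(x)={}_{\psi}\Delta_a(x)/\psi^2(a,c,x)$, and you never establish that monotonicity — you yourself flag it as the ``principal obstacle'' and only describe a plan. From your representation $T^*=1-(2a-c+x)R+a(a-c+1)R^2$ one gets $(T^*)'=-R+R'\bigl[2a(a-c+1)R-(2a-c+x)\bigr]$, in which $-R<0$ while $R'<0$ multiplies a bracket that is eventually negative (since $R(x)\sim 1/x$), so the two terms compete and nothing about the sign follows from $J$ being a Stieltjes transform without a further computation; that computation is exactly the missing content. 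There is also a range problem in your sketch: you invoke \eqref{quotrico} at $(a-1,c-1)$ to handle $K(x)=\psi(a,c,x)/\psi(a-1,c-1,x)$, which needs $a-1>0$, whereas the positivity is claimed for all $a>0$, $c<1$. For comparison, the paper closes this step by a different algebraic reduction: using $\psi(a-1,c,x)=(a-c+x)\psi(a,c,x)-x\psi'(a,c,x)$ and $a(1+a-c)\psi(a+1,c,x)=a\psi(a,c,x)+x\psi'(a,c,x)$, together with the Riccati identity \eqref{fo}, it writes $(1+a-c)T^*(x)=1+\frac{x}{a}\frac{\psi'(a,c,x)}{\psi(a,c,x)}-\frac{x}{a}\left[\frac{x\psi'(a,c,x)}{\psi(a,c,x)}\right]'$ and then, via \eqref{quotrico} applied to $\psi(a+1,c+1,x)/\psi(a,c,x)$ (both parameters shifted, not the ratio you chose), obtains $(1+a-c)T^*(x)=1-\int_0^{\infty}\frac{x^2\varphi_{a,c}(t)}{(x+t)^2}\dt$, whose derivative is manifestly negative for all $a>0$, $c<1$; the limits at $0$ and $\infty$ then give both bounds and sharpness at once. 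Until you supply an analogous sign determination, your proof of the right-hand inequality and of sharpness is incomplete.

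Your proof of the left-hand inequality, by contrast, is correct and is genuinely different from the proof in the theorem itself: strict log-convexity of $a\mapsto\Gamma(1+a-c)\psi(a,c,x)$ via H\"older applied to \eqref{tricorepre}, specialized at $a\pm1$, does yield $\psi(a-1,c,x)\psi(a+1,c,x)>\frac{a-c}{1+a-c}\psi^2(a,c,x)$, i.e. the sharp constant $1/(1+a-c)$, and in fact for all $a>c$ and $x>0$. This is essentially the route the paper itself takes in Section 4 (the $n=1$ case of the determinant ${}_3\mathrm{Det}_n$ built from $h(a,c,x)=\Gamma(1+a-c)\psi(a,c,x)$, cf. Remark \ref{rem7}), so it buys a short, self-contained derivation of the upper bound — but it cannot, by itself, deliver the lower bound $0$ or the sharpness statements, which is precisely where your argument stops short.
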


\begin{proof}[\bf Proof] The proof is very similar to the proof of
\eqref{turantrico}, so we only sketch the proof. By using the
recurrence relations \cite[p. 507]{abra}
$$\psi(a-1,c,x)=(a-c+x)\psi(a,c,x)-x\psi'(a,c,x)$$
and
$$a(1+a-c)\psi(a+1,c,x)=a\psi(a,c,x)+x\psi'(a,c,x),$$
the expression
$${}_{\psi}\Delta_a(x):={\psi^2(a,c,x)}-{\psi(a-1,c,x)}{\psi(a+1,c,x)}$$
can be rewritten as
$${}_{\psi}\Delta_a(x)=\frac{1-x}{1+a-c}\psi^2(a,c,x)-
\frac{x(x-c)}{a(1+a-c)}\psi(a,c,x)\psi'(a,c,x)+
\frac{x^2}{a(1+a-c)}\left[\psi'(a,c,x)\right]^2.$$
In view of \eqref{fo} and \eqref{quotrico} this implies that
$$\frac{(1+a-c){}_{\psi}\Delta_a(x)}{\psi^2(a,c,x)}=1+\frac{x}{a}\frac{\psi'(a,c,x)}{\psi(a,c,x)}-
\frac{x}{a}\left[\frac{x\psi'(a,c,x)}{\psi(a,c,x)}\right]'
=1-\int_0^{\infty}\frac{x^2\varphi_{a,c}(t)\dt}{(x+t)^2}$$
and then
$$\left[\frac{(1+a-c){}_{\psi}\Delta_a(x)}{\psi^2(a,c,x)}\right]'
=-\int_0^{\infty}\frac{2xt\varphi_{a,c}(t)\dt}{(x+t)^3}<0$$
for all $a>0,$ $c<1$ and $x>0.$ Consequently the function $x\mapsto
{{}_{\psi}\Delta_{a}(x)}/{\psi^2(a,c,x)}$ is strictly decreasing on
$(0,\infty),$ which implies that for all $a>1,$ $c<1$ and $x>0$ we
have
$$\alpha_{a}:=\lim_{x\to 0}\frac{{}_{\psi}\Delta_{a}(x)}{\psi^2(a,c,x)}>\frac{{}_{\psi}\Delta_{a}(x)}{\psi^2(a,c,x)}
>\lim_{x\to\infty}\frac{{}_{\psi}\Delta_{a}(x)}{\psi^2(a,c,x)}=:\beta_{a},$$
where $\alpha_a=1/(1+a-c)$ and $\beta_a=0$ in view of the asymptotic
expansions \eqref{asymp1} and \eqref{asymp2}. Moreover, the
right-hand side of the above inequality is valid for all $a>0,$
$c<1$ and $x>0.$
\end{proof}

\begin{remark}
{\em Observe that the left-hand side of the Tur\'an type inequality
\eqref{turantricom} improves the inequality \eqref{eqrem1}, and the
constant $1/(1+a-c)$ cannot be improved. Moreover, we note that in
particular the inequality \eqref{turantricom} becomes
$$0<D_{-2a}^2(x)-D_{-2a-2}(x)D_{-2a+2}(x)<\frac{1}{a+1/2}D_{-2a}^2(x),$$
where $a>0$ and $x>0$ on the left-hand side, and $a>1$ and $x>0$ on
the right-hand side. Clearly, the right-hand side of this inequality
is an improvement over \eqref{eqrem2} and the constant $1/(a+1/2)$
is optimum.

Finally, observe that by using \eqref{whittaker} for $x>0$ the
inequality \eqref{turantricom} can be rewritten as follows
$$\frac{1}{-\mu-\kappa+1/2}W_{\kappa,\mu}^2(x)>W_{\kappa,\mu}^2(x)-
W_{\kappa-1,\mu}(x)W_{\kappa+1,\mu}(x)>0,$$ where the left-hand side
is valid for $-1/2>\mu-1/2>\kappa,$ while the right-hand side is
valid for $1/2>\mu+1/2>\kappa.$}
\end{remark}

The following result is a companion of \eqref{turantrico} and
\eqref{turantricom}.

\begin{theorem}
The function $c\mapsto \psi(a,c,x)$ is strictly logarithmically
convex on $\mathbb{R}$ for all $a,x>0$ fixed, and the following
sharp Tur\'an type inequality
\begin{equation}\label{turantrico3}\frac{a}{c(1+a-c)}{\psi^2(a,c,x)}
<{\psi^2(a,c,x)}-{\psi(a,c-1,x)}{\psi(a,c+1,x)}<0\end{equation} is
valid for all $a>0>c$ and $x>0.$ Furthermore, the right-hand side of
\eqref{turantrico3} holds for all $a,x>0$ and $c\in\mathbb{R}.$ These inequalities are sharp in the sense that the
constants $a(c(1+a-c))^{-1}$ and $0$ are best possible. In
addition, the sharp inequality
\begin{equation}\label{turantrico4}\frac{1}{2-c}{\psi^2(a,c,x)}
<{\psi^2(a,c,x)}-{\psi(a,c-1,x)}{\psi(a,c+1,x)}<0\end{equation}
is also valid for all $x>0$ and $a>c-1>1$ in the case of the
left-hand side, and $a>c-1>0$ in the case of the right-hand side.
These inequalities are sharp in the sense that the
constants $1/(2-c)$ and $0$ are best possible.
\end{theorem}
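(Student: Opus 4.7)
My plan splits the theorem into its three assertions---the logarithmic convexity of $c\mapsto\psi(a,c,x)$, the sharp two-sided inequality \eqref{turantrico3}, and the sharp two-sided inequality \eqref{turantrico4}---and dispatches them in increasing order of difficulty. The logarithmic convexity and the right-hand sides of \eqref{turantrico3} and \eqref{turantrico4} follow by specializing the H\"older argument of Remark~\ref{remholder} to a single parameter. The full inequality \eqref{turantrico4} comes out of the already-proved \eqref{turantrico} via Kummer's transformation. The technical heart is the sharp left-hand side of \eqref{turantrico3}, which I would prove by reducing the problem to the monotonicity in $x$ of a ratio of one-dimensional Laplace transforms and verifying that monotonicity by a Chebyshev-type argument.

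For logarithmic convexity, setting $a_1=a_2=a$ in Remark~\ref{remholder}'s joint H\"older inequality immediately gives $\psi(a,\alpha c_1+(1-\alpha)c_2,x)<\psi(a,c_1,x)^\alpha\psi(a,c_2,x)^{1-\alpha}$ for all $a,x>0$, $c_1\ne c_2\in\mathbb{R}$, and $\alpha\in(0,1)$; the choice $\alpha=1/2$, $c_1=c-1$, $c_2=c+1$ yields the right-hand side of \eqref{turantrico3} for all $a,x>0$ and $c\in\mathbb{R}$. For \eqref{turantrico4}, Kummer's identity $\psi(a,c,x)=x^{1-c}\psi(\tilde a,\tilde c,x)$ with $\tilde a=a-c+1,\tilde c=2-c$, together with the shifted versions $\psi(a,c-1,x)=x^{2-c}\psi(\tilde a+1,\tilde c+1,x)$ and $\psi(a,c+1,x)=x^{-c}\psi(\tilde a-1,\tilde c-1,x)$, yields the pointwise identity $\Delta^c_a(x)/\psi^2(a,c,x)={}_\psi\Delta_{\tilde a,\tilde c}(x)/\psi^2(\tilde a,\tilde c,x)$, where $\Delta^c_a(x):=\psi^2(a,c,x)-\psi(a,c-1,x)\psi(a,c+1,x)$ and ${}_\psi\Delta_{\tilde a,\tilde c}$ is the Tur\'anian already handled in the proof of \eqref{turantrico}. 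The ranges $\tilde a>0>\tilde c$ and $\tilde a>0,\tilde c<1$ translate respectively to $a>c-1>1$ and $a>c-1>0$, and $1/\tilde c=1/(2-c)$; substitution into \eqref{turantrico} directly produces \eqref{turantrico4} with inherited sharpness.

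For the sharp left-hand side of \eqref{turantrico3} with $a>0>c$, squaring and subtracting \eqref{intlap} followed by symmetrization in $(t,s)$ gives the manifestly positive representation
\begin{equation*}
\psi(a,c-1,x)\psi(a,c+1,x)-\psi^2(a,c,x)=\frac{1}{2\Gamma^2(a)}\iint_{(0,\infty)^2}e^{-x(t+s)}t^{a-1}s^{a-1}(1+t)^{c-a-2}(1+s)^{c-a-2}(s-t)^2\,dt\,ds,
\end{equation*}
and the change of variables $u=t+s,v=t-s$ turns both $-\Delta^c_a(x)$ and $\psi^2(a,c,x)$ into one-dimensional Laplace transforms $\mathcal{L}[H](x)$ and $\mathcal{L}[V](x)$ of explicit nonnegative weights $H,V$ on $(0,\infty)$. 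After the further rescaling $w=v/u$ and $\beta=1+2/u$, the ratio $H(u)/V(u)$ becomes, up to a positive constant, $E_{\pi_u}\bigl[4w^2/(\beta^2-w^2)\bigr]$ against the probability density $\pi_u(w)\propto(1-w^2)^{a-1}(\beta^2-w^2)^{c-a-1}\mathbf{1}_{(-1,1)}(w)$. The elementary identity $\tfrac{d}{dx}\bigl(\mathcal{L}[H]/\mathcal{L}[V]\bigr)=\bigl(\mathcal{L}[H]/\mathcal{L}[V]\bigr)\bigl(E_{\nu^V_x}[u]-E_{\nu^H_x}[u]\bigr)$, with $\nu^H_x\propto He^{-xu}$ and $\nu^V_x\propto Ve^{-xu}$, shows that $-\Delta^c_a/\psi^2=\mathcal{L}[H]/\mathcal{L}[V]$ strictly decreases in $x$ as soon as $u\mapsto H(u)/V(u)$ strictly increases.

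To verify that last monotonicity I would differentiate in $\beta$ via the standard formula $\tfrac{d}{d\beta}E_\pi[g]=E_\pi[\partial_\beta g]+\mathrm{Cov}_\pi(g,\partial_\beta\log\rho)$. Applied to $g(w)=4w^2/(\beta^2-w^2)$ and $\rho(w)\propto(1-w^2)^{a-1}(\beta^2-w^2)^{c-a-1}$, one has $\partial_\beta g=-8\beta w^2/(\beta^2-w^2)^2<0$ pointwise, while $g$ and $-\partial_\beta\log\rho=2\beta(a+1-c)/(\beta^2-w^2)$ are positive and strictly increasing in $w^2$ (the latter because $c-a-1<0$, which is automatic for $a>0>c$). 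Chebyshev's sum inequality therefore yields $\mathrm{Cov}_\pi(g,\partial_\beta\log\rho)\le0$, hence $\tfrac{d}{d\beta}E_\pi[g]<0$ strictly, i.e.\ $u\mapsto H(u)/V(u)$ is strictly increasing on $(0,\infty)$. Combining the resulting strict increase of $x\mapsto\Delta^c_a(x)/\psi^2(a,c,x)$ with $\lim_{x\to 0}\Delta^c_a(x)/\psi^2(a,c,x)=a/(c(1+a-c))$ (obtained from \eqref{asymp2} together with $\Gamma(-c)=-\Gamma(1-c)/c$) and $\lim_{x\to\infty}\Delta^c_a(x)/\psi^2(a,c,x)=0$ (from \eqref{asymp1}) finally delivers the sharp left-hand side of \eqref{turantrico3} and the best-possibility of both constants. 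I expect the main obstacle to be this Chebyshev step, since it requires carefully tracking the rescaling $w=v/u$ and the crucial sign $c-a-1<0$; the remaining ingredients are either H\"older, Kummer, or asymptotic calculations already available in the paper.
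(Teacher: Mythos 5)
Your proposal is correct, and on the one genuinely hard point---the sharp left-hand side of \eqref{turantrico3}---it takes a different route from the paper. The paper rewrites ${}_{\psi}\Delta_c$ via the contiguous relations, combines this with the Riccati-type identity \eqref{fo} and the Ismail--Kelker Stieltjes representation \eqref{quotrico} to obtain the closed formula ${}_{\psi}\Delta_c(x)/\psi^2(a,c,x)=-\frac{a}{1+a-c}\int_0^{\infty}\frac{t\varphi_{a,c}(t)}{(x+t)^2}\,\dt$, from which negativity and strict increase in $x$ are immediate; you instead work only with the elementary Laplace integral \eqref{intlap}: the symmetrized double-integral form of $-{}_{\psi}\Delta_c$ is exactly right, the passage to the slice densities $H,V$ in $u=t+s$ is sound, and your covariance/Chebyshev verification that $H/V$ increases (your $E_{\pi_u}[4w^2/(\beta^2-w^2)]$ is in fact $2E_{\pi_u}[w^2/(\beta^2-w^2)]$, a harmless constant, and the needed sign $c-a-1<0$ indeed holds for $a>0>c$) delivers the same strict monotonicity of $x\mapsto{}_{\psi}\Delta_c(x)/\psi^2(a,c,x)$, after which the endpoint values $a/(c(1+a-c))$ and $0$ follow from \eqref{asymp2} and \eqref{asymp1} exactly as in the paper. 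The remaining pieces coincide with the paper's treatment: strict log-convexity in $c$ by the H\"older--Rogers argument of Remark \ref{remholder} specialized to $a_1=a_2=a$ (which also gives the right-hand side of \eqref{turantrico3} for all $c\in\mathbb{R}$), and \eqref{turantrico4} by transporting \eqref{turantrico} through the Kummer transformation \eqref{kummertrans}, with the parameter ranges translating as you state; the paper re-derives the sharpness of \eqref{turantrico4} from the small-$x$ expansion valid for $c>1$, while you inherit it through the pointwise ratio identity, which is equally legitimate. Comparing the two: the paper's argument is shorter and exhibits the ratio as a Stieltjes transform (so monotonicity, and indeed complete monotonicity, comes for free), whereas yours is self-contained---it avoids the nontrivial representation \eqref{quotrico} entirely---at the cost of the change-of-variables bookkeeping and a routine justification of differentiation under the integral sign in $x$ and $\beta$.
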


\begin{proof}[\bf Proof]
The proof of the strict logarithmic convexity of $c\mapsto
\psi(a,c,x)$ goes along the lines outlined in Remark
\ref{remholder}, so we shall omit the details. The right-hand side
of \eqref{turantrico3} follows from this strict logarithmic
convexity property, however, we give here an alternative proof,
which is similar to the proof of \eqref{turantrico}. For this
consider the Tur\'anian
\begin{align*}
{}_{\psi}\Delta_c(x):=\psi^2(a,c,x)-\psi(a,c-1,x)\psi(a,c+1,x),
\end{align*}
which by using the relations \cite[p. 507]{abra}
$$(1+a-c)\psi(a,c-1,x)=(1-c)\psi(a,c,x)-x\psi'(a,c,x)$$
and
$$\psi(a,c+1,x)=\psi(a,c,x)-\psi'(a,c,x),$$
can be rewritten as
$${}_{\psi}\Delta_c(x)=\frac{a}{1+a-c}\psi^2(a,c,x)+\frac{1+x-c}{1+a-c}
\psi(a,c,x)\psi'(a,c,x)-\frac{x}{1+a-c}\left[\psi'(a,c,x)\right]^2.$$
Consequently we have
$$\frac{(1+a-c){}_{\psi}\Delta_c(x)}{\psi^2(a,c,x)}=a+(1+x-c)
\frac{\psi'(a,c,x)}{\psi(a,c,x)}-x\left[\frac{\psi'(a,c,x)}{\psi(a,c,x)}\right]^2
=\left[\frac{x\psi'(a,c,x)}{\psi(a,c,x)}\right]'$$ which in view of
\eqref{quotrico} implies that
$$\frac{{}_{\psi}\Delta_c(x)}{\psi^2(a,c,x)}=
-\frac{a}{1+a-c}\int_0^{\infty}\frac{t\varphi_{a,c}(t)\dt}{(x+t)^2}<0$$
and
$$\left[\frac{{}_{\psi}\Delta_c(x)}{\psi^2(a,c,x)}\right]'=
\frac{a}{1+a-c}\int_0^{\infty}\frac{2t\varphi_{a,c}(t)\dt}{(x+t)^3}>0$$ for all
$a>0,$ $c<1$ and $x>0.$ Hence, the function $x\mapsto
{{}_{\psi}\Delta_{c}(x)}/{\psi^2(a,c,x)}$ maps $(0,\infty)$ into
$(-\infty,0)$ and it is strictly increasing. From this we obtain for
all $a>0,$ $c<1$ and $x>0$
$$\alpha_{c}:=\lim_{x\to 0}\frac{{}_{\psi}\Delta_{c}(x)}{\psi^2(a,c,x)}
<\frac{{}_{\psi}\Delta_{c}(x)}{\psi^2(a,c,x)}
<\lim_{x\to\infty}\frac{{}_{\psi}\Delta_{c}(x)}{\psi^2(a,c,x)}=:\beta_{c},$$
where $(1+a-c)\alpha_c=a/c$ and $\beta_c=0$ in view of the
asymptotic expansions \eqref{asymp1} and \eqref{asymp2}.

Now, let us focus on the inequality \eqref{turantrico4}. By using
the Kummer transformation \cite[p. 505]{abra}
\begin{equation}\label{kummertrans}\psi(a,c,x)=x^{1-c}\psi(1+a-c,2-c,x)\end{equation} the Tur\'an type
inequality \eqref{turantrico} becomes
$$\frac{1}{c}\psi^2(1+a-c,2-c,x)<\psi^2(1+a-c,2-c,x)-\psi(1+a-c,3-c,x)\psi(1+a-c,1-c,x)<0.$$
Replacing $a$ by $a+c-1$ we obtain
$$\frac{1}{c}\psi^2(a,2-c,x)<\psi^2(a,2-c,x)-\psi(a,3-c,x)\psi(a,1-c,x)<0.$$
The replacement of $c$ by $2-c$ gives \eqref{turantrico4}. The
sharpness of inequality \eqref{turantrico4} follows from the large
$x$ asymptotic expansion \eqref{asymp1} and from the expansion
$$\psi(a,c,x) \sim \frac{\Gamma(c-1)}{\Gamma(a)}x^{1-c},\ \ \mbox{as}\ \ x\to 0,$$ which is
valid for fixed $a$ and $c$ if $c>1.$
\end{proof}

\begin{remark}
{\em We note that in view of \eqref{whittaker} the Tur\'an type
inequality \eqref{turantrico3} for $x>0$ in terms of Whittaker
functions $W_{\kappa,\mu}$ reads as follows
$$-\frac{\mu-\kappa+1/2}{(1+2\mu)(\mu+\kappa+1/2)}W_{\kappa,\mu}^2(x)
<W_{\kappa,\mu}^2(x)-W_{\kappa-\frac{1}{2},\mu-\frac{1}{2}}(x)W_{\kappa+\frac{1}{2},\mu+\frac{1}{2}}(x)<0,$$
where the left-hand side holds for $0>\mu+1/2>\kappa,$ while the
right-hand side is valid for $\mu+1/2>\kappa.$ Similarly, the
inequality \eqref{turantrico4} can be rewritten as
$$\frac{1}{1-2\mu}W_{\kappa,\mu}^2(x)<W_{\kappa,\mu}^2(x)-
W_{\kappa-\frac{1}{2},\mu-\frac{1}{2}}(x)W_{\kappa+\frac{1}{2},\mu+\frac{1}{2}}(x)<0,$$
where the left-hand side holds for $1<\mu+1/2<1-\kappa,$ while the
right-hand side is valid for $1/2<\mu+1/2<-\kappa.$ Moreover, it
should be mentioned here that by using the H\"older-Rogers
inequality as in Remark \ref{remholder}, the right-hand side of the
above inequalities can be generalized in the following way: the
two-variable function
$$(\kappa,\mu)\mapsto \Gamma\left(\mu-\kappa+\frac{1}{2}\right)W_{\kappa,\mu}(x)=
\exp\left(-\frac{x}{2}\right)x^{\mu+\frac{1}{2}}
\int_0^{\infty}e^{-xt}t^{\mu-\kappa-\frac{1}{2}}(1+t)^{\mu+\kappa-\frac{1}{2}}\dt$$
is logarithmically convex for $\mu+1/2>\kappa$ and fixed
$x\in\mathbb{R}.$ Finally, observe that the above strict
logarithmic convexity property implies also the inequality
$$W_{\kappa,\mu}^2(x)-
W_{\kappa+\frac{1}{2},\mu-\frac{1}{2}}(x)W_{\kappa-\frac{1}{2},\mu+\frac{1}{2}}(x)
<\frac{1}{\mu-\kappa+1/2}W_{\kappa,\mu}^2(x),$$
however, this is weaker than the right-hand side of
\eqref{turanwhitta}.}
\end{remark}

\begin{remark}
{\em Observe that the Kummer transformation \eqref{kummertrans} is
also useful to prove the right-hand side of \eqref{turantrico}. More
precisely, from \eqref{kummertrans} we obtain
\begin{equation}\label{tricorepre}\Gamma(1+a-c)\psi(a,c,x)=x^{1-c}\Gamma(1+a-c)\psi(1+a-c,2-c,x)=
\int_0^{\infty}x^{1-c}e^{-xt}t^{a-c}(1+t)^{-a}\dt\end{equation} and
by using the H\"older-Rogers inequality, as in Remark
\ref{remholder}, we conclude that the two-variable function
$(a,c)\mapsto \Gamma(1+a-c)\psi(a,c,x)$ is strictly logarithmically
convex and consequently the right-hand side of the Tur\'an type
inequality \eqref{turantrico} is valid for all $c<a+1$ and $x>0.$}
\end{remark}

\section{Tur\'an determinants of Tricomi confluent hypergeometric functions}
\setcounter{equation}{0}

In this section we discuss the connection between the present paper
and \cite{il}. For this let us consider the determinants
$${}_1\mathrm{Det}_n(x):=\left|\begin{array}{cccc}
g(a,c,x)&g(a+1,c,x)&\cdots&g(a+n,c,x)\\
g(a+1,c,x)&g(a+2,c,x)&\cdots&g(a+n+1,c,x)\\
\vdots&\vdots& &\vdots\\
g(a+n,c,x)&g(a+n+1,c,x)&\cdots&g(a+2n,c,x)\end{array}\right|$$ and
$${}_2\mathrm{Det}_n(x):=\left|\begin{array}{cccc}
g(a,c,x)&g(a,c+1,x)&\cdots&g(a,c+n,x)\\
g(a,c+1,x)&g(a,c+2,x)&\cdots&g(a,c+n+1,x)\\
\vdots&\vdots& &\vdots\\
g(a,c+n,x)&g(a,c+n+1,x)&\cdots&g(a,c+2n,x)\end{array}\right|,$$
where $g(a,c,x):=\Gamma(a)\psi(a,c,x).$ In \cite{il} Ismail and
Laforgia stated that for all $a>0,$ $c\in\mathbb{R}$ and
$n\in\{0,1,\dots\}$ the determinants ${}_1\mathrm{Det}_n(x)$ and
${}_2\mathrm{Det}_n(x)$ are completely monotonic on $(0,\infty)$
with respect to $x,$ that is, we have
\begin{equation}\label{deter1}(-1)^m{}_1\mathrm{Det}_n^{(m)}(x)\geq0\end{equation}
and
\begin{equation}\label{deter11}(-1)^m{}_2\mathrm{Det}_n^{(m)}(x)\geq0\end{equation}
for all $a,x>0,$ $c\in\mathbb{R}$ and $n,m\in\{0,1,\dots\}.$ Observe
that if we choose in \eqref{deter1} the values $m=0,$ $n=1$ and
instead of $a$ we write $a-1,$ then we obtain the weak Tur\'an type
inequality \eqref{eqrem1}. Similarly, if we take in \eqref{deter11}
the values $m=0,$ $n=1$ and we write $c-1$ instead of $c,$ then we
get the right-hand side of the inequality \eqref{turantrico3}.

We now present a general result which is in the same spirit as
\cite[Remark 2.9]{il} and which generalizes the above mentioned
results from \cite{il}. For this consider
$\alpha,\beta\in\mathbb{R}$ such that $\alpha<\beta,$ and let
$\{f_n\}_{n\geq0}$ be a sequence of functions, defined by
\begin{equation}\label{type}f_n(x): = \int_{\alpha}^{\beta} [\f(t,x)]^n d\mu(t,x),\end{equation} where
$\f,\mu:[\alpha,\beta]\times\mathbb{R}\to\mathbb{R}.$ Consider also the determinant
$$\mathrm{Det}_n(x):=\left| \begin{array}{cccc}
f_0(x) & f_{1}(x) & \cdots & f_{n}(x)\\
f_{1}(x) & f_{2}(x) & \cdots & f_{n+1}(x)\\
\vdots & \vdots &  & \vdots\\
f_{n}(x) & f_{n+1}(x) & \cdots & f_{2n}(x)\\
\end{array} \right|.$$
Then the following result is valid.

\begin{theorem}\label{th5}
We have the following representation
\begin{equation}\label{deterrepre}
\mathrm{Det}_n(x) = \frac{1}{(n+1)!} \int_{[\alpha,\beta]^{n+1}}
\prod_{0 \le j < k \le n} [\f(t_j,x) - \f(t_k,x)]^2 \prod_{j=0}^n
d\mu(t_j,x).
\end{equation}
\end{theorem}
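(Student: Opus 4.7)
The plan is to establish the identity via the classical Andreief/Heine style manipulation for Hankel determinants of moments. The key observation is that each entry $f_{j+k}(x)$ is already an integral over $[\alpha,\beta]$, so by introducing a distinct integration variable $t_j$ for each row and using the multilinearity of the determinant in its rows, one can pull all $n+1$ integrations outside the determinant:
\begin{equation*}
\mathrm{Det}_n(x) = \int_{[\alpha,\beta]^{n+1}} \det\bigl(\f(t_j,x)^{j+k}\bigr)_{j,k=0}^n \prod_{j=0}^n d\mu(t_j,x).
\end{equation*}

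Next I would factor $\f(t_j,x)^{j}$ out of the $j$-th row, which converts the determinant into a Vandermonde determinant in the quantities $\f(t_0,x),\ldots,\f(t_n,x)$:
\begin{equation*}
\det\bigl(\f(t_j,x)^{j+k}\bigr)_{j,k=0}^n = \Bigl(\prod_{j=0}^n \f(t_j,x)^j\Bigr)\,\det\bigl(\f(t_j,x)^{k}\bigr)_{j,k=0}^n = \Bigl(\prod_{j=0}^n \f(t_j,x)^j\Bigr)\prod_{0\le j<k\le n}\bigl(\f(t_k,x)-\f(t_j,x)\bigr).
\end{equation*}
Plugging this back gives an expression for $\mathrm{Det}_n(x)$ involving only the first power of the Vandermonde, not its square, so a symmetrization step is needed.

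The symmetrization is the heart of the argument. Since $\prod_j d\mu(t_j,x)$ is a symmetric measure on $[\alpha,\beta]^{n+1}$, the value of the integral is unchanged if we replace the integrand by its average over all permutations $\sigma\in S_{n+1}$ of the variables $(t_0,\ldots,t_n)$. Under the permutation $t_j\mapsto t_{\sigma(j)}$, the Vandermonde factor picks up $\operatorname{sign}(\sigma)$, so
\begin{equation*}
\mathrm{Det}_n(x) = \frac{1}{(n+1)!}\int_{[\alpha,\beta]^{n+1}} \Bigl[\sum_{\sigma\in S_{n+1}}\operatorname{sign}(\sigma)\prod_{j=0}^n \f(t_{\sigma(j)},x)^j\Bigr]\prod_{0\le j<k\le n}\bigl(\f(t_k,x)-\f(t_j,x)\bigr)\prod_{j=0}^n d\mu(t_j,x).
\end{equation*}
The alternating sum in the brackets is precisely the Leibniz expansion of $\det(\f(t_k,x)^j)_{j,k=0}^n$, which equals the same Vandermonde $\prod_{0\le j<k\le n}(\f(t_k,x)-\f(t_j,x))$. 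Multiplying the two Vandermonde factors together yields the square, and because $(\f(t_j,x)-\f(t_k,x))^2=(\f(t_k,x)-\f(t_j,x))^2$ the sign convention is immaterial, giving \eqref{deterrepre}.

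No step here is really an obstacle beyond bookkeeping; the only point requiring mild care is verifying that one may freely interchange the finite sum $\det(\cdot)$ with the $n+1$ integrations (this uses nothing more than linearity, and needs only that the integrals $f_n(x)$ exist, which is part of the hypothesis \eqref{type}). Once that is noted, the identity follows from the Vandermonde squared identity in two strokes: factor rows, then symmetrize.
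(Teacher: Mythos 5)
Your proposal is correct and follows essentially the same route as the paper's own proof: introduce one integration variable per row, factor $[\f(t_j,x)]^j$ out of each row to expose a Vandermonde determinant, then symmetrize over the $(n+1)!$ permutations of the variables so that the alternating sum reproduces a second Vandermonde factor and yields the square. The only cosmetic difference is the orientation of the sign in the Vandermonde product, which, as you note, is irrelevant after squaring.
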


\begin{proof}[\bf Proof]
Observe that
\begin{align*}
\mathrm{Det}_n(x)&= \int_{[\alpha,\beta]^{n+1}} \left|
\begin{array}{cccc}
1 & \f(t_0,x) & \cdots & [\f(t_0,x)]^n \\
\f(t_1,x)  &[ \f(t_1,x)]^2 & \cdots & [\f(t_1,x)]^{n+1}\\
\vdots & \vdots & {} & \vdots\\
{} [\f(t_n,x)]^n & [\f(t_n,x)]^{n+1} & \cdots & [\f(t_n,x)]^{2n}  \\
\end{array} \right|
\prod_{j=0}^n  d\mu(t_j,x) \\
&=
\int_{[\alpha,\beta]^{n+1}}
\left| \begin{array}{cccc}
1 & \f(t_0,x) & \cdots & [\f(t_0,x)]^n \\
1 & \f(t_1,x) & \cdots & [\f(t_1,x)]^{n}\\
\vdots & \vdots & {} & \vdots\\
1 & \f(t_n,x) & \cdots & [\f(t_n,x)]^{n}  \\
\end{array} \right|
\prod_{j=0}^n  [\f(t_j,x)]^jd\mu(t_j,x)\\
&= \textup{sign}(\sigma)\int_{[\alpha,\beta]^{n+1}} \left|
\begin{array}{cccc}
1 & \f(t_{\sigma(0)},x) & \cdots & [\f(t_{\sigma(0)},x)]^n \\
1 & \f(t_{\sigma(1)},x) & \cdots & [\f(t_{\sigma(1)},x)]^{n}\\
\vdots & \vdots & {} & \vdots\\
{} 1 & \f(t_{\sigma(n)},x) & \cdots & [\f(t_{\sigma(n)},x)]^{n}  \\
\end{array} \right| \prod_{j=0}^n  [\f(t_j,x) ]^{\sigma(j)} d\mu(t_j,x),
\end{align*}
where $\sigma$ is a permutation on $\{0,1, \dots, n\}.$ The
determinant in the last expression is a Vandermonde determinant
which can be evaluated as a product. Thus, if we add over all possible
$\sigma$ and divide by $(n+1)!,$ then we can see that $\mathrm{Det}_n(x)$ is given by the right-hand side of \eqref{deterrepre} because
$$\sum_{\sigma} \textup{sign}(\sigma) \prod_{j=0}^n [\f(t_j,x)]^{\sigma(j)} =
\prod_{0 \le j < k \le n} [\f(t_j,x) - \f(t_k,x)].$$
\end{proof}

Note that the proof above is similar to Heine's classical proof of
his integral representation, see for example  \cite{Sze} and \cite{Ismbook}.
 We also note that properties of
Tur\'an determinants of which entries are orthogonal polynomial
families and special functions have been discussed also in the
papers \cite{ismail,il,karlin}. See also the book \cite{Ismbook} for
more details.

Now, observe that by using \eqref{intlap} and \eqref{deterrepre} we
easily obtain
\begin{align*}{}_1\mathrm{Det}_n(x)=\frac{1}{(n+1)!}&\int_{[0,\infty)^{n+1}}\exp\left(-x\sum_{j=0}^nt_j\right)
\\&\times\prod_{0 \le j < k \le
n}\left(\frac{t_j}{t_j+1}-\frac{t_k}{t_k+1}\right)^2\prod_{j=0}^nt_j^{a-1}(1+t_j)^{c-a-1}\dt_j\end{align*}
and
\begin{align*}{}_2\mathrm{Det}_n(x)=\frac{1}{(n+1)!}&\int_{[0,\infty)^{n+1}}\exp\left(-x\sum_{j=0}^nt_j\right)
\\&\times\prod_{0 \le j < k \le
n}\left({t_j}-{t_k}\right)^2\prod_{j=0}^nt_j^{a-1}(1+t_j)^{c-a-1}\dt_j\end{align*}
which clearly imply \eqref{deter1} and \eqref{deter11}.

Furthermore, if we consider the determinants
$${}_3\mathrm{Det}_n(x):=\left|\begin{array}{cccc}
h(a,c,x)&h(a+1,c,x)&\dots&h(a+n,c,x)\\
h(a+1,c,x)&h(a+2,c,x)&\dots&h(a+n+1,c,x)\\
\vdots&\vdots& &\vdots\\
h(a+n,c,x)&h(a+n+1,c,x)&\dots&h(a+2n,c,x)\end{array}\right|$$ and
$${}_4\mathrm{Det}_n(x):=\left|\begin{array}{cccc}
h(a,c,x)&h(a,c+1,x)&\dots&h(a,c+n,x)\\
h(a,c+1,x)&h(a,c+2,x)&\dots&h(a,c+n+1,x)\\
\vdots&\vdots& &\vdots\\
h(a,c+n,x)&h(a,c+n+1,x)&\dots&h(a,c+2n,x)\end{array}\right|,$$ where
$h(a,c,x):=\Gamma(1+a-c)\psi(a,c,x),$ then by using
\eqref{tricorepre} and \eqref{deterrepre} we obtain

\begin{align*}{}_3\mathrm{Det}_n(x)=\frac{1}{(n+1)!}&\int_{[0,\infty)^{n+1}}\exp\left(-x\sum_{j=0}^nt_j\right)
\\&\times\prod_{0 \le j < k \le
n}\left(\frac{t_j}{t_j+1}-\frac{t_k}{t_k+1}\right)^2\prod_{j=0}^nx^{1-c}t_j^{a-c}(1+t_j)^{-a}\dt_j\end{align*}
and
\begin{align*}{}_4\mathrm{Det}_n(x)=\frac{1}{(n+1)!}&\int_{[0,\infty)^{n+1}}\exp\left(-x\sum_{j=0}^nt_j\right)
\\&\times\prod_{0 \le j < k \le
n}\left(\frac{1}{xt_j}-\frac{1}{xt_k}\right)^2\prod_{j=0}^nx^{1-c}t_j^{a-c}(1+t_j)^{-a}\dt_j.\end{align*}

Similarly, if we consider the determinants
$${}_5\mathrm{Det}_n(x):=\left|\begin{array}{cccc}
g(a,c,x)&g(a+1,c+1,x)&\cdots&g(a+n,c+n,x)\\
g(a+1,c+1,x)&g(a+2,c+2,x)&\cdots&g(a+n+1,c+n+1,x)\\
\vdots&\vdots& &\vdots\\
g(a+n,c+n,x)&g(a+n+1,c+n+1,x)&\cdots&g(a+2n,c+2n,x)\end{array}\right|$$
and
$${}_6\mathrm{Det}_n(x):=\left|\begin{array}{cccc}
h(a,c,x)&h(a+1,c+1,x)&\dots&h(a+n,c+n,x)\\
h(a+1,c+1,x)&h(a+2,c+2,x)&\dots&h(a+n+1,c+n+1,x)\\
\vdots&\vdots& &\vdots\\
h(a+n,c+n,x)&h(a+n+1,c+n+1,x)&\dots&h(a+2n,c+2n,x)\end{array}\right|,$$
then by using \eqref{intlap}, \eqref{tricorepre} and
\eqref{deterrepre} we get
\begin{align*}{}_5\mathrm{Det}_n(x)=\frac{1}{(n+1)!}&\int_{[0,\infty)^{n+1}}\exp\left(-x\sum_{j=0}^nt_j\right)
\\&\times\prod_{0 \le j < k \le
n}\left({t_j}-{t_k}\right)^2\prod_{j=0}^nt_j^{a-1}(1+t_j)^{c-a-1}\dt_j\end{align*}
and
\begin{align*}{}_6\mathrm{Det}_n(x)=\frac{1}{(n+1)!}&\int_{[0,\infty)^{n+1}}\exp\left(-x\sum_{j=0}^nt_j\right)
\\&\times\prod_{0 \le j < k \le
n}\left(\frac{1}{x(t_j+1)}-\frac{1}{x(t_k+1)}\right)^2\prod_{j=0}^nx^{1-c}t_j^{a-c}(1+t_j)^{-a}\dt_j.\end{align*}

Finally, if we consider the determinants
$${}_7\mathrm{Det}_n(x):=\left|\begin{array}{cccc}
g(a,c,x)&g(a+1,c+2,x)&\cdots&g(a+n,c+2n,x)\\
g(a+1,c+2,x)&g(a+2,c+4,x)&\cdots&g(a+n+1,c+2n+2,x)\\
\vdots&\vdots& &\vdots\\
g(a+n,c+2n,x)&g(a+n+1,c+2n+2,x)&\cdots&g(a+2n,c+4n,x)\end{array}\right|$$
and
$${}_8\mathrm{Det}_n(x):=\left|\begin{array}{cccc}
h(a,c,x)&h(a+1,c+2,x)&\dots&h(a+n,c+2n,x)\\
h(a+1,c+2,x)&h(a+2,c+4,x)&\dots&h(a+n+1,c+2n+2,x)\\
\vdots&\vdots& &\vdots\\
h(a+n,c+2n,x)&h(a+n+1,c+2n+2,x)&\dots&h(a+2n,c+4n,x)\end{array}\right|,$$
then by using again \eqref{intlap}, \eqref{tricorepre} and
\eqref{deterrepre} we obtain
\begin{align*}{}_7\mathrm{Det}_n(x)=\frac{1}{(n+1)!}&\int_{[0,\infty)^{n+1}}\exp\left(-x\sum_{j=0}^nt_j\right)
\\&\times\prod_{0 \le j < k \le
n}\left({t_j}-{t_k}\right)^2\left({t_j}+{t_k}+1\right)^2\prod_{j=0}^nt_j^{a-1}(1+t_j)^{c-a-1}\dt_j\end{align*}
and
\begin{align*}{}_8\mathrm{Det}_n(x)=\frac{1}{(n+1)!}&\int_{[0,\infty)^{n+1}}\exp\left(-x\sum_{j=0}^nt_j\right)
\\&\times\prod_{0 \le j < k \le
n}\left(\frac{1}{xt_j(t_j+1)}-\frac{1}{xt_k(t_k+1)}\right)^2\prod_{j=0}^nx^{1-c}t_j^{a-c}(1+t_j)^{-a}\dt_j.\end{align*}

Now, taking into account the well-known fact that the product of
completely monotonic functions is also completely monotonic, the
above integral representations imply the following result, which
complements \cite[Theorem 2.8]{il}.

\begin{theorem}\label{th6}
Let $n\in\{0,1,\dots\}.$ If $a+1>c>1,$ then the determinants
${}_3\mathrm{Det}_n(x),$ ${}_4\mathrm{Det}_n(x),$
${}_6\mathrm{Det}_n(x)$ and ${}_8\mathrm{Det}_n(x)$ are completely
monotonic on $(0,\infty)$ with respect to $x.$ Moreover, the
determinants ${}_5\mathrm{Det}_n(x)$ and ${}_7\mathrm{Det}_n(x)$ are
also completely monotonic on $(0,\infty)$ with respect to $x$ for
all $a>0$ and $c\in\mathbb{R}.$
\end{theorem}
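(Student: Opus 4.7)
The plan is to deduce complete monotonicity directly from the six multi-dimensional integral representations displayed just before the theorem. These representations were obtained by feeding either \eqref{intlap} (for the $g$-type determinants ${}_5\mathrm{Det}_n$ and ${}_7\mathrm{Det}_n$) or \eqref{tricorepre} (for the $h$-type determinants ${}_3\mathrm{Det}_n,{}_4\mathrm{Det}_n,{}_6\mathrm{Det}_n$ and ${}_8\mathrm{Det}_n$) into the Heine-type identity \eqref{deterrepre} from Theorem \ref{th5}. The proof then rests on three elementary facts: the exponential $x\mapsto e^{-xT}$ is completely monotonic on $(0,\infty)$ for every $T\geq 0$; the power $x\mapsto x^{-s}$ is completely monotonic on $(0,\infty)$ for every $s\geq 0$; and complete monotonicity is preserved under finite products and under integration against non-negative measures (the differentiation under the integral sign being justified by the exponential damping in the $t_j$ variables).

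For ${}_5\mathrm{Det}_n$ and ${}_7\mathrm{Det}_n$ the verification is immediate. The only $x$-dependence of the integrand is the factor $\exp(-x\sum_j t_j)$, which is completely monotonic in $x$ for every non-negative value of $\sum_j t_j$. The accompanying Vandermonde-type products $\prod_{j<k}(t_j-t_k)^2$ and $\prod_{j<k}(t_j-t_k)^2(t_j+t_k+1)^2$, together with the Laplace weight $\prod_j t_j^{a-1}(1+t_j)^{c-a-1}$, are manifestly non-negative on $(0,\infty)^{n+1}$ whenever $a>0$, so no additional hypothesis on $c$ is required.

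For ${}_3\mathrm{Det}_n,{}_4\mathrm{Det}_n,{}_6\mathrm{Det}_n$ and ${}_8\mathrm{Det}_n$ I would carefully collect every power of $x$ appearing in the integrand. The Kummer weight from \eqref{tricorepre} contributes an overall factor $x^{(1-c)(n+1)}$, while the squared Vandermondes contribute $x^{0}$ in the case of ${}_3\mathrm{Det}_n$ and $x^{-n(n+1)}$ in the cases of ${}_4\mathrm{Det}_n,{}_6\mathrm{Det}_n,{}_8\mathrm{Det}_n$. Under the hypothesis $c>1$ the resulting exponent of $x$ is non-positive for every $n\geq 0$, so the $x$-dependent part of the integrand is a product of $e^{-x\sum_j t_j}$ and $x^{-s}$ with $s\geq 0$, hence completely monotonic in $x$. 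The companion hypothesis $a+1>c$ is exactly what makes the Kummer integral representation \eqref{tricorepre} available with $\Gamma(1+a-c)>0$ and with the weight $t_j^{a-c}(1+t_j)^{-a}$ integrable near $t_j=0$.

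The main obstacle I anticipate is not conceptual but clerical: one must keep track of the $x$-exponents in the four $h$-type determinants and check that the squared Vandermondes, although they appear as reciprocals involving $x$, expand into a product of a non-negative function of the $t_j$'s and a non-positive power of $x$. Once this bookkeeping is done, the complete monotonicity of the full integrand in $x$—for each fixed $(t_0,\dots,t_n)$—translates, via the non-negativity of the remaining $t$-weights and the three elementary facts quoted at the start, into the complete monotonicity of each determinant ${}_k\mathrm{Det}_n(x)$ on $(0,\infty)$.
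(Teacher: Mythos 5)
Your proposal is correct and is essentially the paper's own argument: the paper likewise combines the Heine-type identity \eqref{deterrepre} with the representations \eqref{intlap} and \eqref{tricorepre}, and then invokes closure of complete monotonicity under products (of $e^{-x\sum_j t_j}$ and nonpositive powers of $x$, the latter appearing only for the $h$-type determinants and being nonpositive precisely because $c>1$) and under integration against the nonnegative $t$-weights. Your explicit bookkeeping of the $x$-exponents, including the factor $x^{-n(n+1)}$ extracted from the reciprocal Vandermonde products for ${}_4\mathrm{Det}_n$, ${}_6\mathrm{Det}_n$, ${}_8\mathrm{Det}_n$, simply makes precise what the paper leaves implicit.
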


\begin{remark}\label{rem7}
{\em We note that the above results complement the main results of
section 2. To see this in what follows we will discuss the
particular cases of the above results. Since for admissible values
of $a$ and $c$ (given in Theorem \ref{th6}) and for
$n\in\{0,1,\dots\}$ the functions $x\mapsto{}_i\mathrm{Det}_n(x),$
where $i\in\{3,\dots,8\},$ are completely monotonic on $(0,\infty)$,
for those values of $a$ and $c$ and for all $n,m\in\{0,1,\dots\}$
and $x>0$ we have
\begin{equation}\label{deter2}(-1)^m{}_i\mathrm{Det}_n^{(m)}(x)>0.\end{equation}
We note that if $i=3$ and we choose in \eqref{deter2} the values
$m=0,$ $n=1$ and instead of $a$ we write $a-1,$ then we reobtain for
$a>c>1$ and $x>0$ the left-hand side of the sharp Tur\'an type
inequality \eqref{turantricom}. Similarly, if $i=4$ and we take in
\eqref{deter2} the values $m=0,$ $n=1$ and we write $c-1$ instead of
$c,$ then for all $a+1>c>2$ and $x>0$ we get the Tur\'an type
inequality
$$\psi^2(a,c,x)-\psi(a,c-1,x)\psi(a,c+1,x)<\frac{1}{a-c+1}\psi^2(a,c,x),$$
however, this is weaker than the right-hand side of the sharp
inequality \eqref{turantrico3} or \eqref{turantrico4}. Moreover, by
using the inequality \eqref{deter2} for $i=6,$ $m=0,$ $n=1$ and then
changing $a$ with $a-1$ and $c$ with $c-1,$ we obtain that the
right-hand of the sharp Tur\'an type inequality \eqref{turantrico}
is also valid for $a+1>c>2$ and $x>0.$ Now, let $i=8.$ Then by using
again the inequality \eqref{deter2} for $m=0,$ $n=1$ and then
changing $a$ with $a-1$ and $c$ with $c-2,$ we obtain the Tur\'an
type inequality
$$\psi^2(a,c,x)-\psi(a-1,c-2,x)\psi(a+1,c+2,x)<\frac{1}{a-c+1}\psi^2(a,c,x),$$
which is valid for all $a+1>c>3$ and $x>0.$ This resembles to
\eqref{turanhankel}. Analogously, if $i=5$ and we take in
\eqref{deter2} the values $m=0,$ $n=1$ and we write $a-1$ instead of
$a,$ $c-1$ instead of $c,$ then for all $a>1,$ $c\in\mathbb{R}$ and
$x>0$ we get the Tur\'an type inequality
$$\psi^2(a,c,x)-\psi(a,c-1,x)\psi(a,c+1,x)<\frac{1}{a}\psi^2(a,c,x),$$
however, this is weaker than the right-hand side of the sharp
inequality \eqref{turantrico3} or \eqref{turantrico4}. Finally, let
$i=7.$ By using again the inequality \eqref{deter2} for $m=0,$ $n=1$
and then changing $a$ with $a-1$ and $c$ with $c-2,$ we obtain the
Tur\'an type inequality
\begin{equation}\label{turanweak}\psi^2(a,c,x)-\psi(a-1,c-2,x)\psi(a+1,c+2,x)<\frac{1}{a}\psi^2(a,c,x),\end{equation}
which is valid for all $a>1,$ $c\in\mathbb{R}$ and $x>0.$ This also
resembles to \eqref{turanhankel}. Moreover, if we take in
\eqref{turanweak} instead of $a$ the value $a+1/2$ and instead of
$c$ the value $2a+1,$ then the above inequality becomes
$$\Delta_a(x)<\frac{1}{a+1/2}\psi^2\left(a+\frac{1}{2},2a+1,x\right),$$
which is valid for all $a>1/2$ and $x>0.$ This complements
the left-hand side of \eqref{turanhankel}, however, it is weaker
than the right-hand side of \eqref{turanhankel}.}
\end{remark}

\begin{remark}
{\em It should be mentioned here that similar results to those
mentioned in Theorem \ref{th6} are also valid for the Kummer
confluent hypergeometric $\Phi(a,c,\cdot).$ Namely, if we consider
the determinant
$${}_9\mathrm{Det}_n(x):=\left|\begin{array}{cccc}
u(a,c,x)&u(a,c+1,x)&\cdots&u(a,c+n,x)\\
u(a,c+1,x)&u(a,c+2,x)&\cdots&u(a,c+n+1,x)\\
\vdots&\vdots& &\vdots\\
u(a,c+n,x)&u(a,c+n+1,x)&\cdots&u(a,c+2n,x)\end{array}\right|,$$
where
$$u(a,c,x):=\frac{\Gamma(c-a)}{\Gamma(c)}\Phi(a,c,x)=\frac{1}{\Gamma(a)}\int_0^1e^{xt}t^{a-1}(1-t)^{c-a-1}\dt,$$
then applying \eqref{deterrepre} we get
\begin{align*}{}_9\mathrm{Det}_n(x)=\frac{1}{(n+1)!}\frac{1}{\Gamma^{n+1}(a)}&\int_{[0,1]^{n+1}}\exp\left(x\sum_{j=0}^nt_j\right)
\\&\times\prod_{0 \le j < k \le
n}\left({t_k}-{t_j}\right)^2\prod_{j=0}^nt_j^{a-1}(1-t_j)^{c-a-1}\dt_j.\end{align*}
This in turn implies a known result of Ismail and Laforgia
\cite[Theorem 2.7]{il}. Namely, for $c>a>0$ and $n\in\{0,1,\dots\}$
the determinant ${}_9\mathrm{Det}_n(x)$ as a function of $x$ is
absolutely monotonic on $(0,\infty),$ i.e. for all
$n,m\in\{0,1,\dots\},$ $c>a>0$ and $x>0$ we have
$${}_9\mathrm{Det}_n^{(m)}(x)\geq0.$$ Moreover, by using again \eqref{deterrepre} the determinant
$${}_{10}\mathrm{Det}_n(x):=\left|\begin{array}{cccc}
v(a,c,x)&v(a+1,c,x)&\dots&v(a+n,c,x)\\
v(a+1,c,x)&v(a+2,c,x)&\dots&v(a+n+1,c,x)\\
\vdots&\vdots& &\vdots\\
v(a+n,c,x)&v(a+n+1,c,x)&\dots&v(a+2n,c,x)\end{array}\right|,$$ where
$v(a,c,x):=\Gamma(a)\Gamma(c-a)\Phi(a,c,x),$ can be rewritten as
\begin{align*}{}_{10}\mathrm{Det}_n(x)=\frac{\Gamma^{n+1}(c)}{(n+1)!}&\int_{[0,1]^{n+1}}\exp\left(x\sum_{j=0}^nt_j\right)
\\&\times\prod_{0 \le j < k \le
n}\left(\frac{t_j}{1-t_j}-\frac{t_k}{1-t_k}\right)^2\prod_{j=0}^nt_j^{a-1}(1-t_j)^{c-a-1}\dt_j.\end{align*}
Consequently for $c>a>0$ and $n\in\{0,1,\dots\}$ the determinant
${}_{10}\mathrm{Det}_n(x)$ as a function of $x$ is also absolutely
monotonic on $(0,\infty),$ and thus for all $n,m\in\{0,1,\dots\},$
$c>a>0$ and $x>0$ we have
$${}_{10}\mathrm{Det}_n^{(m)}(x)\geq0.$$ Now, if we take $m=0$ and $n=1,$ and we change $a$ to $a-1$ we obtain the following Tur\'an type inequality
$$\frac{\Phi(a-1,c,x)\Phi(a+1,c,x)}{\Phi^2(a,c,x)}\geq\frac{(a-1)(c-a-1)}{a(c-a)},$$
where $c>a+1>2$ and $x>0.$ This inequality complements the result of Barnard et
al. \cite[Corollary 2]{barnard}, is similar to the result of Karp
\cite[Corollary 3]{karp0} and it is weaker than the left-hand side of \cite[Eq. (10)]{karp}.

Finally, we also note that the determinant
$${}_{11}\mathrm{Det}_n(x):=\left|\begin{array}{cccc}
w(a,c,x)&w(a+1,c+1,x)&\cdots&w(a+n,c+n,x)\\
w(a+1,c+1,x)&w(a+2,c+2,x)&\cdots&w(a+n+1,c+n+1,x)\\
\vdots&\vdots& &\vdots\\
w(a+n,c+n,x)&w(a+n+1,c+n+1,x)&\cdots&w(a+2n,c+2n,x)\end{array}\right|,$$
where
$$w(a,c,x):=\frac{\Gamma(a)\Gamma(c-a)}{\Gamma(a)}\Phi(a,c,x)=\int_0^1e^{xt}t^{a-1}(1-t)^{c-a-1}\dt,$$
can be rewritten as follows
\begin{align*}{}_{11}\mathrm{Det}_n(x)=\frac{1}{(n+1)!}&\int_{[0,1]^{n+1}}\exp\left(x\sum_{j=0}^nt_j\right)
\\&\times\prod_{0 \le j < k \le
n}\left({t_j}-{t_k}\right)^2\prod_{j=0}^nt_j^{a-1}(1-t_j)^{c-a-1}\dt_j.\end{align*}
This in turn implies that for $c>a>0$ and $n\in\{0,1,\dots\}$ the
determinant ${}_{11}\mathrm{Det}_n(x)$ as a function of $x$ is also
absolutely monotonic on $(0,\infty),$ and thus for all
$n,m\in\{0,1,\dots\},$ $c>a>0$ and $x>0$ we have
$${}_{11}\mathrm{Det}_n^{(m)}(x)\geq0.$$ Now, if we take in this inequality $m=0$ and $n=1,$ and
we change $a$ to $a-1,$ $c$ to $c-1$ we obtain the following Tur\'an
type inequality
$$\frac{\Phi(a-1,c-1,x)\Phi(a+1,c+1,x)}{\Phi^2(a,c,x)}\geq\frac{(a-1)c}{(c-1)a},$$
where $c>a>1$ and $x>0.$ This is a particular case of the left-hand
side inequality of \cite[Corollary 3]{karp0} and is the counterpart
of \cite[Theorem 1]{karp0}. See also \cite[Theorem 2]{baricz2} for a
similar Tur\'an type inequality.}
\end{remark}

We end the paper with the following remark.

\begin{remark}
{\em We note that many other special functions have representations
of the type \eqref{type}. For example, the modified Bessel functions
of the first and second kind $I_a$ and $K_a$ are like this. Thus, by
using the idea used in this section we can explore this further to
get positivity of many determinants which entries are special
functions. Also we may be able to prove that the determinants are
completely (or absolutely) monotonic if the integral representation
\eqref{type} involves completely (absolutely) monotonic kernels. See
\cite{bapo} for similar results on the so-called Kr\"atzel function.

Now, we would like to complement the results of Theorem 2.3 and 2.5
from \cite{il}. For this we consider first for $a>-1/2$ and $x>0$
the integral representation (see \cite[p. 376]{abra} or \cite[p.
172]{Watson})
$$ K_a(x) =
\frac{\sqrt{\pi}\left(\frac{x}{2}\right)^a}{\Gamma\left(a+\frac{1}{2}\right)}
\int_1^\infty e^{-xt}\, (t^2-1)^{a-\frac{1}{2}} \dt$$ and apply
Theorem \ref{th5} for
$$f_n(x) = \frac{\Gamma\left(a+n+\frac{1}{2}\right)}{\sqrt{\pi}\left(\frac{x}{2}\right)^{a+n}} e^xK_{a+n}(x)$$
to get
\begin{align*}
{}_{12}\mathrm{Det}_n(x)&=\left|\begin{array}{cccc}u_a(x)&u_{a+1}(x)&\cdots&u_{a+n}(x)\\
u_{a+1}(x)&u_{a+2}(x)&\cdots&u_{a+n+1}(x)\\
\vdots&\vdots& &\vdots\\
u_{a+n}(x)&u_{a+n+1}(x)&\cdots&u_{a+2n}(x)\end{array}\right|\\
&=\frac{1}{(n+1)!}\int_{[1,\infty)^{n+1}} \exp\left(-x\sum_{j=0}^n
(t_j-1)\right)\prod_{0 \le j < k \le n} \left(t_j^2 - t_k^2\right)^2
 \prod_{j=0}^n (t_j^2-1)^{a -\frac{1}{2}} dt_j,
\end{align*}
where
$$u_a(x):=\frac{\Gamma\left(a+\frac{1}{2}\right)}{\sqrt{\pi}\left(\frac{x}{2}\right)^{a}}
e^xK_{a}(x).$$ Clearly, the determinant ${}_{12}\mathrm{Det}_n(x)$ is
completely monotonic on $(0,\infty)$ for each $n\in\{0,1,\dots\}$
and $a>-1/2.$ Now, for $a>-1/2$ consider the integral representation
(see \cite[p. 376]{abra} or \cite[p. 79]{Watson})
$$I_a(x) = \frac{\left(\frac{x}{2}\right)^a}{\sqrt{\pi}\Gamma\left(a +\frac{1}{2}\right)} \int_{-1}^1 e^{\pm xt}(1-t^2)^{a-\frac{1}{2}}\dt.$$
Here we have two choices for $f_n$.  They are
$$f_n(x) = \frac{\sqrt{\pi}\Gamma\left(a+n+\frac{1}{2}\right)}{\left(\frac{x}{2}\right)^{a+n}}e^x I_{a+n}(x)$$
or $$f_n(x) =
\frac{\sqrt{\pi}\Gamma\left(a+n+\frac{1}{2}\right)}{\left(\frac{x}{2}\right)^{a+n}}e^{-x}
I_{a+n}(x).$$ One is led to an absolutely monotonic determinant and
one to completely monotonic determinant. More precisely, as a
function of $x$ and for $a>-1/2$ and $n\in\{0,1,\dots\}$ the
determinant
\begin{align*}
{}_{13}\mathrm{Det}_n(x)&=\left|\begin{array}{cccc}v_a(x)&v_{a+1}(x)&\cdots&v_{a+n}(x)\\
v_{a+1}(x)&v_{a+2}(x)&\cdots&v_{a+n+1}(x)\\
\vdots&\vdots& &\vdots\\
v_{a+n}(x)&v_{a+n+1}(x)&\cdots&v_{a+2n}(x)\end{array}\right|\\
&=\frac{1}{(n+1)!}\int_{[-1,1]^{n+1}} \exp\left(x\sum_{j=0}^n
(1-t_j)\right)\prod_{0 \le j < k \le n} \left(t_j^2 - t_k^2\right)^2
 \prod_{j=0}^n (t_j^2-1)^{a -\frac{1}{2}} dt_j,
\end{align*}
where
$$v_a(x):=\frac{\sqrt{\pi}\Gamma\left(a+\frac{1}{2}\right)}{\left(\frac{x}{2}\right)^{a}}
e^xI_{a}(x)=\int_{-1}^1 e^{(1-t)x}(1-t^2)^{a-\frac{1}{2}}\dt,$$ is
absolutely monotonic on $(0,\infty),$ while the determinant
\begin{align*}
{}_{14}\mathrm{Det}_n(x)&=\left|\begin{array}{cccc}w_a(x)&w_{a+1}(x)&\cdots&w_{a+n}(x)\\
w_{a+1}(x)&w_{a+2}(x)&\cdots&w_{a+n+1}(x)\\
\vdots&\vdots& &\vdots\\
w_{a+n}(x)&w_{a+n+1}(x)&\cdots&w_{a+2n}(x)\end{array}\right|\\
&=\frac{1}{(n+1)!}\int_{[-1,1]^{n+1}} \exp\left(-x\sum_{j=0}^n
(1-t_j)\right)\prod_{0 \le j < k \le n} \left(t_j^2 - t_k^2\right)^2
 \prod_{j=0}^n (t_j^2-1)^{a -\frac{1}{2}} dt_j,
\end{align*}
where
$$w_a(x):=\frac{\sqrt{\pi}\Gamma\left(a+\frac{1}{2}\right)}{\left(\frac{x}{2}\right)^{a}}
e^{-x}I_{a}(x)=\int_{-1}^1 e^{-(1-t)x}(1-t^2)^{a-\frac{1}{2}}\dt,$$
is completely monotonic on $(0,\infty).$ We mention here that for
$n=1$ the above results lead to weak Tur\'an type inequalities for
modified Bessel functions of the first and second kinds, which were
mentioned already in Remarks 2.4 and 2.6 in \cite{il}.

Finally, it is worth to mention that the above method can be used
also to prove absolute and complete monotonic properties of
determinants whose entries are probability density functions. For
example, for the probability density function of the non-central chi
distribution we can prove such results. More precisely, if we
consider the probability density function
$\chi_{a,\tau}:(0,\infty)\to(0,\infty)$ of the non-central chi
distribution (see \cite{johnson}) with shape parameter $a>0$ and
non-centrality parameter $\tau>0,$ defined by
$$\chi_{a,\tau}(x):=\tau e^{-\frac{x^2+\tau^2}{2}}\left(\frac{x}{\tau}\right)^{\frac{a}{2}}I_{\frac{a}{2}-1}(\tau x),$$
then it is not difficult to see that as a function of $x$ and for
all $a>1/2,$ $\tau>0$ and $n\in\{0,1,\dots\}$ the determinant
\begin{align*}
{}_{15}\mathrm{Det}_n(x)&=\left|\begin{array}{cccc}r_{2a}(x)&r_{2a+1}(x)&\cdots&r_{2a+n}(x)\\
r_{2a+1}(x)&r_{2a+2}(x)&\cdots&r_{2a+n+1}(x)\\
\vdots&\vdots& &\vdots\\
r_{2a+n}(x)&r_{2a+n+1}(x)&\cdots&r_{2a+2n}(x)\end{array}\right|\\
&=\frac{1}{(n+1)!}\int_{[-1,1]^{n+1}} \exp\left(\tau x\sum_{j=0}^n
(1-t_j)\right)\prod_{0 \le j < k \le n} \left(t_j^2 - t_k^2\right)^2
 \prod_{j=0}^n (t_j^2-1)^{a -\frac{3}{2}} dt_j,
\end{align*}
where
$$r_a(x):=\frac{\sqrt{\pi}2^{\frac{a}{2}-1}\Gamma\left(\frac{a-1}{2}\right)}{x^{a-1}}
e^{\frac{(x+\tau)^2}{2}}\chi_{a,\tau}(x)=\int_{-1}^1 e^{(1-t)\tau
x}(1-t^2)^{\frac{a-3}{2}}\dt,$$ is absolutely monotonic on
$(0,\infty),$ while the determinant
\begin{align*}
{}_{16}\mathrm{Det}_n(x)&=\left|\begin{array}{cccc}s_{2a}(x)&s_{2a+1}(x)&\cdots&s_{2a+n}(x)\\
s_{2a+1}(x)&s_{2a+2}(x)&\cdots&s_{2a+n+1}(x)\\
\vdots&\vdots& &\vdots\\
s_{2a+n}(x)&s_{2a+n+1}(x)&\cdots&s_{2a+2n}(x)\end{array}\right|\\
&=\frac{1}{(n+1)!}\int_{[-1,1]^{n+1}} \exp\left(-\tau x\sum_{j=0}^n
(1-t_j)\right)\prod_{0 \le j < k \le n} \left(t_j^2 - t_k^2\right)^2
 \prod_{j=0}^n (t_j^2-1)^{a -\frac{3}{2}} dt_j,
\end{align*}
where
$$s_a(x):=\frac{\sqrt{\pi}2^{\frac{a}{2}-1}\Gamma\left(\frac{a-1}{2}\right)}{x^{a-1}}
e^{\frac{(x-\tau)^2}{2}}\chi_{a,\tau}(x)=\int_{-1}^1 e^{-(1-t)\tau
x}(1-t^2)^{\frac{a-3}{2}}\dt,$$ is completely monotonic on
$(0,\infty).$ Note that the positivity of the above determinants for
$n=1$ yields the Tur\'an type inequality
$$\chi_{2a+1,\tau}^2(x)-\chi_{2a,\tau}(x)\chi_{2a+2,\tau}(x)<
\left[1-\frac{\Gamma^2(a)}{\Gamma\left(a-\frac{1}{2}\right)\Gamma(a+\frac{1}{2})}\right]\chi_{2a+1,\tau}^2(x),$$
where $a>1/2,$ $\tau>0$ and $x>0.$ This inequality completes
\cite[Theorem 2.3]{andras}, however it is weaker than the sharp
Tur\'an type inequality
$$0<\chi_{2a+1,\tau}^2(x)-\chi_{2a,\tau}(x)\chi_{2a+2,\tau}(x)<
\left[1-\frac{\Gamma^2\left(a+\frac{1}{2}\right)}{\Gamma(a)\Gamma(a+1)}\right]\chi_{2a+1,\tau}^2(x),$$
which holds for all $a,\tau,x>0$ and is a particular case of
\cite[Theorem 2.4]{bariczstudia}.}
\end{remark}

\subsection*{Acknowledgements}
The research of \'A. Baricz was supported in part by the J\'anos
Bolyai Research Scholarship of the Hungarian Academy of Sciences and
in part by the Romanian National Council for Scientific Research in
Education CNCSIS-UEFISCSU, project number PN-II-RU-PD\underline{
}388/2012, and was completed during his visit in September 2011 to City University of
Hong Kong. This author is grateful to the Department of Mathematics
of City University of Hong Kong for hospitality. The research of
M.E.H. Ismail was partially supported by the NPST Program of King Saud University,
Riyadh, project number 10-MAT 1293-02 and by the Research Grants Council of Hong
Kong under contract \# 101411. This work was carried out while M.E.H. Ismail was
affiliated with the Department of Mathematics, City University of Hong Kong,
Kowloon, Hong Kong. Both of the authors are grateful to the referees for extensive comments
and constructive criticisms that improved the presentation of the results.

\end{document}